\title{The framed little 2-discs operad and diffeomorphisms of
  handlebodies}
\author{Jeffrey Giansiracusa} 
\address{Department of Mathematics, Swansea University\\
 Singleton Park \\ Swansea, Wales \\ SA2 8PP \\ United Kingdom} 
\email{j.h.giansiracusa@swan.ac.uk}
\date{\today}
\DeclareSymbolFont{cmlargesymbols}{OMX}{cmex}{m}{n}
\DeclareMathSymbol{\mycoprod}{\mathop}{cmlargesymbols}{"60}
\let\coprod\mycoprod
\numberwithin{equation}{subsection}
\newtheorem{thmA}{Theorem}
\newtheorem{theorem}{Theorem}[subsection]  
\newtheorem{lemma}[theorem]{Lemma} 
\newtheorem{proposition}[theorem]{Proposition}
\newtheorem{corollary}[theorem]{Corollary}
\newtheorem{definition}[theorem]{Definition}
\theoremstyle{remark} 
\newtheorem{remark}[theorem]{Remark}
\newcommand{\defeq}{\mathbin{\mathpalette{\vcenter{\hbox{$:$}}}=}}
\def\co{\colon\thinspace} 
\def\commacat{\downarrow}
\newcommand{\Diff}{\mathrm{Diff}}
\newcommand{\id}{\mathrm{id}}
\newcommand{\Z}{\mathbb{Z}}
\newcommand{\Mod}{\mathrm{Mod}} 
\newcommand{\Sym}{\mathrm{Sym}} 
\newcommand{\hoMod}{\mathbb{L}\mathrm{Mod}} 
\DeclareMathOperator*{\colim}{colim} 
\DeclareMathOperator*{\hocolim}{hocolim} 
\newcommand{\Cat}{\mathscr{C}\hspace*{-1pt}\mathit{at}}
\newcommand{\Top}{\mathscr{T}\hspace*{-1pt}\mathit{op}}
\newcommand{\dgVect}{\mbox{dg-$\mathscr{V}\hspace*{-2pt}\mathit{ect}$}}
\newcommand{\Gext}{\mathscr{G}\hspace*{-1pt}\mathit{r}_{+}}
\newcommand{\G}{\mathscr{G}\hspace*{-1pt}\mathit{r}} 
\newcommand{\RG}{\mathscr{R}\mathit{ib}} 
\newcommand{\mobG}{\mathscr{M}\hspace*{-1pt}\mathit{\ddot{o}b}}
\newcommand{\Gred}{\G{}^\mathit{red}} 
\newcommand{\Gconn}{\G_\mathit{conn}} 
\newcommand{\Gtree}{\G_\mathit{tree}} 
\newcommand{\Gconnred}{\G_\mathit{conn}^\mathit{red}} 
\newcommand{\RGtree}{\RG_\mathit{tree}}
\newcommand{\mobGtree}{\mobG_\mathit{tree}}
\newcommand{\In}{\mathrm{In}} 
\newcommand{\HH}{\mathcal{H}} 
\newcommand{\MCG}{\mathcal{MCG}} 
\newcommand{\MCGoid}{\mathscr{S}} 
\newcommand{\NMCGoid}{\widetilde{\mathscr{S}}} 
\newcommand{\OS}{\mathcal{OS}} 
\newcommand{\NOS}{\widetilde{\mathcal{OS}}} 
\newcommand{\Hand}{\mathcal{H}\mathit{bdy}} 
\newcommand{\Ass}{\mathcal{A}\mathit{ss}} 
\newcommand{\InvAss}{\mathcal{A}\mathit{ss}^h} 
\begin{document}
\begin{abstract}
  The framed little 2-discs operad is homotopy equivalent to a cyclic
  operad.  We show that the derived modular envelope of this cyclic
  operad (i.e., the modular operad freely generated in a homotopy
  invariant sense) is homotopy equivalent to the modular operad made
  from classifying spaces of diffeomorphism groups of 3-dimensional
  handlebodies with marked discs on their boundaries.  A modification
  of the argument provides a new and elementary proof of K.~Costello's
  theorem that the derived modular envelope of the associative operad
  is homotopy equivalent to the ``open string'' modular operad made
  from moduli spaces of Riemann surfaces with marked intervals on the
  boundary.  Our technique also recovers a theorem of C.~Braun that
  the derived modular envelope of the cyclic operad that describes
  associative algebras with involution is homotopy equivalent to the
  modular operad made from moduli spaces of unoriented Klein surfaces
  with open string gluing.
\end{abstract}
\maketitle

\section{Introduction}

The purpose of this paper is to demonstrate how certain interesting
spaces in low-dimensional topology, namely classifying spaces of
mapping class groups of 3-dimensional handlebodies and oriented or
unoriented surfaces with boundary, can be built up from relatively
simple cyclic operads via the modular envelope construction.  Such
models lead to graph complexes that compute the cohomology of these
groups, generalising the well-known construction of the ribbon graph
complex that computes the cohomology of moduli spaces of Riemann
surfaces.

Cyclic operads were introduced by Getzler and Kapranov in
\cite{Getzler-cyclic}.  Roughly speaking, a cyclic operad is an operad
in which the roles of inputs and outputs are exchangeable.  A
prototypical example is the gluing of Deligne-Mumford-Knudsen
compactified moduli spaces of genus zero complex curves with marked
points.  Modular operads were introduced in \cite{Getzler-modular}.  A
modular operad is a cyclic operad together with self-gluings where an
input can be contracted with the output; a prototypical example is
given by the compactified moduli spaces of complex curves of arbitrary
genus.

A cyclic operad $\mathcal{O}$ generates a modular operad
$\Mod_!\mathcal{O}$ known as the \emph{modular envelope} of
$\mathcal{O}$.  The spaces of the modular envelope can be built as
certain colimits of spaces of $\mathcal{O}$ over categories of graphs.
This construction has a derived (homotopy invariant) version
$\hoMod_!\mathcal{O}$, in which the colimit is replaced by a homotopy
colimit.  See section \ref{mod-envelope-section} for details.

The framed little 2-discs operad $f\mathcal{D}_2$ was introduced by
Getzler in \cite{Getzler-BV} to describe homological conformal field
theories at genus zero.  Topologically, (group complete) algebras over
$f\mathcal{D}_2$ are (up to homotopy) 2-fold loops on spaces with a
circle action \cite{Salvatore-Wahl}, and the homology of
$f\mathcal{D}_2$ is the operad that describes Batalin-Vilkovisky
algebras.  Although $f\mathcal{D}_2$ is not a cyclic operad on the
nose, it is homotopy equivalent to a cyclic operad.  Various homotopy
equivalent cyclic models exist, such as the conformal balls operad of
\cite{Budney} or the compactified moduli spaces of rational pointed
curves with phase parameters from \cite [section 2.4]{KSM} (see also
\cite{cyclic-formality}).

In this paper we shall consider a cyclic model for $f\mathcal{D}_2$
made from diffeomorphism groups of 3-balls with marked 2-discs on
their boundary (the diffeomorphisms fix the discs point-wise).  This
cyclic model is naturally the genus zero part $\Hand_0$ of the modular
operad $\Hand$ made from classifying spaces of diffeomorphism groups
of 3-dimensional handlebodies with marked discs on their boundaries
and with the operad composition given by gluing along the marked
discs.  These operads will be constructed precisely in section
\ref{operad-defs-section}.  Our main theorem says roughly that the
handlebody modular operad $\Hand$ is freely generated (in the derived
sense) by its genus zero part.

\begin{thmA}\label{handlebody-theorem}
  There is a map of modular operads \[\hoMod_!\Hand_0 \to
  \Hand \] that is and isomorphism on $\pi_0$ and is a homotopy
  equivalence on all components except for the component corresponding
  to a solid torus with zero marked boundary discs.
\end{thmA}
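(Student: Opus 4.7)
The plan is to realise both sides as homotopy colimits / classifying spaces of explicit categories and then compare them componentwise via a Quillen-type argument that reduces the claim to contractibility of a combinatorial space of disc cut-systems in a handlebody.

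\textbf{Step 1 (Construction of the assembly map).} Because $\Hand$ is a modular operad whose genus-zero part is $\Hand_0$, the universal property of the derived modular envelope supplies a canonical map $\hoMod_!\Hand_0 \to \Hand$. Concretely, a vertex of an $n$-valent vertex of a graph $\Gamma$ labelled by a point of $\Hand_0(n) = B\Diff(D^3; n\text{ marked discs})$, together with the combinatorics of $\Gamma$, assembles to a classifying-space point of a 3-dimensional handlebody built by gluing 3-balls along marked boundary discs according to the edges of $\Gamma$. The $\pi_0$ statement is immediate since 3-dimensional handlebodies with $n$ marked boundary discs are classified up to diffeomorphism by the pair $(g,n)$, and each such pair is realised by an evident graph (e.g.\ a wedge of $g$ circles with $n$ legs attached to a single vertex).

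\textbf{Step 2 (Fibrewise reformulation).} Fix a handlebody $H$ of type $(g,n)$. Using the standard description of $\hoMod_!\Hand_0$ as the classifying space of a Grothendieck construction over the graph category, the fibre of the assembly map over (the component of) $B\Diff(H)$ can be identified, via Quillen's Theorem B or a straightforward group-completion-of-a-category argument, with the classifying space $B\mathcal{D}(H)$ of a category $\mathcal{D}(H)$ whose objects are isotopy classes of properly embedded disc systems $\Sigma \subset H$ (rel.\ boundary) whose complement $H \setminus \Sigma$ is a disjoint union of 3-balls and whose morphisms are inclusions of sub-systems, with $\Diff(H)$ acting. Thus Theorem A reduces to proving that $B\mathcal{D}(H) \simeq *$ whenever $(g,n) \neq (1,0)$.

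\textbf{Step 3 (Contractibility of the cut-system category).} This is the core geometric input, and is closely related to the well-known contractibility of the disc complex of a handlebody (in the spirit of results of Hatcher, McCullough, and others). The strategy is to pick any basepoint cut system $\Sigma_0$ and to produce, for an arbitrary $\Sigma$, a canonical zig-zag in $\mathcal{D}(H)$ from $\Sigma$ to $\Sigma_0$ via disc surgery on intersections; a standard inductive argument on the number of intersections, together with the fact that any two cut systems have a common refinement obtained by a finite sequence of such surgeries, promotes this to contractibility of the nerve. When $(g,n) = (1,0)$ the solid torus admits no disc system whose complement is a disjoint union of balls, so $\mathcal{D}(H)$ is empty (the spine is a circle, not a trivalent graph), which is exactly the advertised exception.

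\textbf{Main obstacle.} The hardest step is Step 3: matching the usual contractibility statements about disc complexes of handlebodies with the particular indexing category $\mathcal{D}(H)$ that arises from the graph category of the derived modular envelope. One must take care with parallel discs, with the role of the marked boundary discs (which act as ``legs'' of the graph and force certain disc systems to meet the boundary in a prescribed way), and with how automorphisms of graphs compare to isotopies of cut systems; the bookkeeping for these issues, rather than any hard geometric content, is what will take the most work, and the solid-torus exception emerges naturally from it.
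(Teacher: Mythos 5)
Your Step 2 contains the gap that makes this theorem genuinely harder than its surface analogue, and your proposal does not address it. You reduce the theorem to contractibility of a disc-system category $\mathcal{D}(H)$ by identifying it with the homotopy fibre of the assembly map. But that identification fails: when you glue the genus-zero pieces $K_{\alpha,j}$ of a complete disc system $\alpha$ back together, the induced homomorphism $\prod_j \mathcal{H}(K_{\alpha,j}) \to \mathrm{Stab}(\alpha)$ is surjective but \emph{not} injective --- its kernel is the free abelian group generated by simultaneous opposite Dehn twists around each cutting disc (Proposition \ref{gluing-kernel-prop} in the paper). Consequently an object of the Grothendieck construction $\Gconn(P)\int \mathscr{H}_0$ lying over a fixed disc system has automorphism group $\Z^{E}$ rather than being rigid, and the homotopy fibre of the assembly map over a handlebody $K$ is not $B\mathscr{D}_0(K)$ but a space fibred over it with torus fibres $(S^1)^{E}$. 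This is precisely why the analogue of Lemma \ref{ribbon-final-lemma} (which does make your argument work for Theorems \ref{ribbon-graphs} and \ref{mobius-graphs}, where the genus-zero pieces have trivial automorphisms) breaks down here. The paper's proof has to add an entire extra layer: it pushes both sides forward along the reduction functor $R\co \Gconn(P) \to \Gconnred(P)$ that deletes bivalent vertices, shows that $\mathbb{L}R_!$ applied to the source implements the homotopy quotient by the tori $\prod_e S^1_e$ of Dehn twists (via the semi-simplicial/bar-construction identification of Propositions \ref{bivalent-to-simplicial-prop} and \ref{simplicial-equiv-prop}), and only then obtains a pointwise equivalence over reduced graphs. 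Without some mechanism of this kind your reduction to contractibility of $\mathcal{D}(H)$ proves the wrong statement.

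A secondary but real error: your explanation of the solid-torus exception is incorrect. The solid torus \emph{does} admit a complete disc system (a meridian disc cuts it into a ball with two marked boundary discs, which is admissible, and the dual graph is a vertex with a loop, rank $1$); so $\mathcal{D}(S^1\times D^2)$ is nonempty. The exception arises because the disc-system category of the solid torus is nonempty but \emph{not contractible}, mirroring the fact that the identity component of its diffeomorphism group is $S^1\times S^1$ rather than contractible. Your Step 3 sketch of contractibility for admissible $K$ (Hatcher/McCullough-style surgery on intersections, with care about parallel discs and marked boundary discs) is in the right spirit and matches the paper's appeal to \cite{McCullough} and \cite{Hatcher-triang}, but it is not the bottleneck of the proof.
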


The idea of the proof is the following.  A complete disc system in a
handlebody $K$ is a collection of 2-discs that partition $K$ into
genus zero pieces.  Contractibility of the space of complete disc
systems follows easily from the well-known contractibility of the
usual disc complex (the issue with the solid torus stems from
non-contractibility of the disc complex in this one exceptional case).
Thus the space of handlebodies is homotopy equivalent to the space of
handlebodies equipped with complete disc systems.  This space maps to
the space of graphs by sending a complete disc system to its dual
graph.  The modular envelope of the genus zero part $\Hand_0$ also
maps to the space of graphs.  We show the homotopy equivalence of the
theorem by a fibrewise comparison over the space of graphs.

Simplified versions of the above argument can in fact prove other
interesting theorems of this type.  It has long been known that ribbon
graphs give orbi-cell decompositions of moduli spaces of Riemann
surfaces.  In \cite{Costello1,Costello2} Costello proved the following
beautiful incarnation of this idea.  Let $\Ass$ denote the associative
operad.  It has the structure of a cyclic operad, and, up to homotopy,
it can be thought of as the framed little 1-discs operad.  It is
straightforward from the definitions to see that $\hoMod_!\Ass$ is the
modular operad of made from moduli spaces of metric ribbon graphs with
gluing at external legs.  Let $\OS$ denote the so-called ``open string
moduli space'' modular operad made from classifying spaces of mapping
class groups of oriented surfaces with marked intervals on their
boundaries and with the operad composition maps given by gluing
surfaces at marked intervals.  By Teichm\"uller theory, the spaces of
$\OS$ are homotopy equivalent to moduli spaces of Riemann surfaces
with boundary and marked points on the boundary, and this modular
operad governs \emph{open topological conformal field theories}.
\begin{thmA}\label{ribbon-graphs}
  There is a map of modular operads $\hoMod_!\Ass \to \OS$ that is a
  $\pi_0$ isomorphism and is a homotopy equivalence on all components
  except that of the annulus with zero marked boundary intervals.
\end{thmA}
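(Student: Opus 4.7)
The plan is to follow the template of the handlebody argument, with properly embedded arcs in a surface replacing properly embedded discs in a handlebody, and with ribbon graphs replacing ordinary graphs. Given an oriented surface $\Sigma$ with marked intervals on its boundary, define a \emph{complete arc system} to be a collection of pairwise disjoint properly embedded arcs, with endpoints on the marked intervals, that cuts $\Sigma$ into pieces each homeomorphic to a disc with marked intervals on its boundary. Such a disc piece is precisely a generator of the cyclic operad $\Ass$, since $\Ass((n))$ may be identified with the set of cyclic orderings of $n$ marked intervals on the boundary of a disc. Sending a complete arc system to its dual graph---one vertex per disc piece, one edge per arc---produces a ribbon graph, the cyclic ordering of half-edges at each vertex being inherited from the cyclic ordering of marked intervals on the corresponding disc.

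First I would verify that for every surface other than the exceptional annulus the space of complete arc systems is contractible. This should follow from the contractibility of the arc complex of a surface with boundary (originally due to Harer, with cleaner proofs by Hatcher and others), since complete arc systems correspond to top-dimensional simplices of a suitable variant of this complex. It follows that the classifying space of the diffeomorphism group of $\Sigma$ fixing the marked intervals is homotopy equivalent to the classifying space of the same group acting on pairs $(\Sigma, \text{complete arc system})$. This latter space admits a natural map to a space of ribbon graphs via the dual graph construction, and simultaneously $\hoMod_!\Ass$ tautologically maps to the space of ribbon graphs via its presentation as a homotopy colimit indexed by ribbon graphs.

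The theorem then reduces to a fibrewise comparison over the space of ribbon graphs. Over a fixed ribbon graph $G$, the fibre in $\hoMod_!\Ass$ is the homotopy quotient by $\mathrm{Aut}(G)$ of the product over vertices of the discrete spaces $\Ass((n_v))$, while the fibre in $\OS$ is the classifying space of the subgroup of $\Diff(\Sigma,\partial)$ that preserves the arc system dual to $G$; both are equivalent to $B\mathrm{Aut}(G)$ on non-exceptional components. The main obstacle is exactly the exceptional annulus with no marked boundary intervals: no arcs can be drawn, so no complete arc system exists and this component is simply invisible to $\hoMod_!\Ass$, in direct parallel to the solid torus exception in Theorem~\ref{handlebody-theorem}. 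Subject to the usual care with cofinality of the categories of ribbon graphs involved, the fibrewise equivalence on the remaining components should follow by essentially the same pattern as in the handlebody case, and in fact one expects the argument here to be strictly simpler because the disc pieces and their diffeomorphism groups are themselves contractible.
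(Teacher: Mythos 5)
Your outline matches the paper's strategy: both proofs replace a surface by a surface equipped with a complete arc system, use contractibility of the space of such systems, and then compare with the homotopy colimit description of $\hoMod_!\Ass$ via a Grothendieck construction. Two points of divergence are worth recording. First, the paper's arcs are required to be \emph{disjoint} from the marked boundary intervals (an arc records the locus where two marked intervals were glued, and the surviving marked intervals are untouched), whereas you put the arc endpoints \emph{on} the marked intervals; with your convention the complementary pieces are not discs with marked boundary intervals in the sense of $\MCGoid_0$ and the dual graph does not acquire its legs correctly, so you should switch to the paper's convention. Second, you do not actually need the fibrewise comparison over ribbon graphs that you import from the handlebody proof: the paper observes (Lemma \ref{ribbon-final-lemma}) that the comparison functor $\Gconn(P)\int \MCGoid_0 \to \MCGoid(P)\int\mathscr{A}_0$ is an honest equivalence of categories --- essentially surjective and full by cutting along the arcs, and faithful because a disc with marked boundary intervals has trivial automorphism group --- which is exactly the simplification you anticipate in your last sentence, made precise. (The ribbon structure never needs to appear as a separate indexing object; it is carried by the disc labels on the vertices of an ordinary graph.)

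The one step where your justification is not yet a proof is the contractibility of the space of \emph{complete} arc systems. Contractibility of the full arc complex does not transfer to the subspace of complete (filling) systems merely because the latter ``correspond to top-dimensional simplices'': complete systems are not top-dimensional simplices (one can always add parallel arcs), and in any case a contractible complex can have a badly behaved top-dimensional part. The paper's Proposition \ref{arc-contractibility} supplies the missing argument: Hatcher's surgery argument gives contractibility of the category of all arc systems, and the inclusion of the complete ones is then shown to be an equivalence by Quillen's Theorem A together with an induction on $(b_1,n)$, using that the relevant comma categories are products of the categories of complete arc systems on the strictly simpler pieces obtained by cutting. With that lemma in place, and the arc convention corrected, your argument goes through.
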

Costello's proof uses the geometry of a certain compactification of
the moduli spaces of Riemann surfaces.  Our proof is instead based on
the well-known contractibility of the arc complex.

Our method can also be used to prove an unoriented version of
Theorem \ref{ribbon-graphs}.  Let $\NOS$ denote the ``unoriented
open string moduli space'' modular operad made from classifying spaces
of mapping class groups of unoriented surfaces (the surfaces need not
be orientable) with boundary and marked oriented intervals on the
boundary and with the operad compositions given by gluing of the
internals (compatibly with their orientations).  Next, consider the
`hermitian associative' operad $\InvAss$ in $\Top$ that describes
associative monoids with an involution $x \mapsto \overline{x}$ such
that $\overline{x\cdot y} = \overline{y} \cdot \overline{x}$.  This
cyclic operad can also be thought of (up to homotopy) as an unoriented
version of the framed little 1-discs, meaning that the framings need
not all have the same orientation.  The derived modular envelope of
$\InvAss$ is the modular operad of moduli spaces of M\"obius graphs,
which are an unoriented variant of the notion of ribbon graphs.  

\begin{thmA}\label{mobius-graphs}
  There is a map of modular operads $\hoMod_!\InvAss \to \NOS$ that is
  a $\pi_0$ isomorphism and is a homotopy equivalence on all
  components except those of the annulus or M\"obius band with zero
  marked boundary intervals.
\end{thmA}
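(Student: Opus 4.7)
The plan is to adapt essentially verbatim the arc-complex argument sketched for Theorem~\ref{ribbon-graphs}, but now allowing the ambient surface to be unoriented and allowing some of the cutting arcs to be one-sided. First I would construct the comparison map $\hoMod_!\InvAss \to \NOS$. A point of the (homotopy) colimit defining $\hoMod_!\InvAss$ is (up to homotopy) a M\"obius graph with a metric together with a family of $\InvAss$-operations at vertices; the standard ``ribbon-thickening'' construction, now taken in the unoriented M\"obius setting, turns this datum into an unoriented surface-with-boundary carrying marked oriented intervals, and hence into a point of $\NOS$. Compatibility with the modular composition and with the $\mathbb{Z}/2$ that encodes the involution on $\InvAss$ is checked at the level of the obvious topological gluing.

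Next I would introduce, for each unoriented surface $S$ with marked oriented boundary intervals, the \emph{complete arc complex} $A(S)$ whose simplices are isotopy classes of disjoint, pairwise non-parallel, properly embedded arcs with endpoints on the marked intervals, where the arcs are allowed to be one-sided (i.e.\ may have non-orientable tubular neighbourhood) and a top simplex is one that cuts $S$ into polygonal discs each carrying exactly one marked interval. The key input is contractibility of $A(S)$ away from a short explicit list of exceptional topological types. For orientable $S$ this is classical (Harer, Hatcher, Penner); for non-orientable $S$ the analogous statement is already in the literature (a Hatcher-style flow argument, as used for example by Hatcher for the non-orientable disc complex), and a self-contained proof proceeds just as in the orientable case by a ``surgery towards a fixed arc system'' deformation retraction. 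The exceptional cases where this fails are precisely those where no arc can be made to miss a fixed simple closed curve, namely the annulus and the M\"obius band with no marked intervals; these are the two components excluded in the statement.

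With contractibility in hand, the fibrewise comparison runs exactly as for Theorem~\ref{ribbon-graphs}. Cutting an unoriented surface along a complete arc system produces a dual graph, and the combinatorial data needed to reconstruct the surface up to homeomorphism is exactly a M\"obius structure at each vertex together with a $\mathbb{Z}/2$-valued ``twist'' on each edge coming from a one-sided arc. On the $\NOS$ side, choose a model in which objects are pairs (unoriented surface, complete arc system); by the contractibility above this projects to $\NOS$ by a componentwise homotopy equivalence outside the two exceptional components. On the $\hoMod_!\InvAss$ side, one unwinds the definition of the modular envelope to see that it is exactly the homotopy colimit of the same diagram of surfaces indexed over the category of M\"obius graphs, with the $\mathbb{Z}/2$ edge-twists encoding the involution of $\InvAss$. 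Both sides thus receive a natural map to the nerve of the category of M\"obius graphs with metrics, and comparing fibres over a fixed graph $\Gamma$ reduces to the assertion that the space of unoriented surfaces together with an arc system whose dual graph is $\Gamma$ is a classifying space for the automorphisms of the pair, which is immediate because cutting along a complete arc system produces rigid polygons.

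The main obstacle I expect is the bookkeeping in the unoriented setting: in contrast to the orientable case, one must carefully track orientations of the marked boundary intervals versus the (absent) orientation of $S$, and in particular verify that the $\mathbb{Z}/2$-twists on edges arising from one-sided arcs match precisely the involution bits arising from the cyclic/modular structure on $\InvAss$. Once this compatibility is pinned down, the identification of exceptional components is a direct check: the annulus has arc complex equal to the vertex set of an infinite set of parallel classes of arcs, and the unoriented M\"obius band is its $\mathbb{Z}/2$-analogue, so both produce the same failure of contractibility and give the only cases in which the fibrewise comparison breaks down.
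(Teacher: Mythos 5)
Your proposal follows essentially the same route as the paper: contractibility of the complete arc complex for unoriented admissible surfaces (failing exactly for the annulus and M\"obius band with no marked intervals), followed by a comparison of the M\"obius-graph-indexed homotopy colimit with the category of surfaces equipped with complete arc systems via cutting and gluing, where rigidity of the cut discs (trivial automorphism groups) yields the equivalence. The only caveats are cosmetic convention mismatches: in the paper the arcs have endpoints \emph{disjoint from} the marked intervals (so that glued intervals literally become arcs of the system) and completeness means cutting into discs with at least two marked intervals, and a properly embedded arc is never genuinely one-sided --- the $\Z/2$-twist is carried by the dual-graph edge in the M\"obius structure rather than by a tubular neighbourhood of the arc --- but neither point affects the structure of the argument.
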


Braun \cite{Braun} has recently independently introduced the
operads $\InvAss$ and $\NOS$ and given a very thorough study of their
properties.  In particular, he gives a proof of Theorem
\ref{mobius-graphs} following Costello's method.

\subsection{Graph homology and homotopy colimits}
The underlying spaces of the derived modular envelope of a cyclic operad
$\mathcal{O}$ are given as homotopy colimits of products of the spaces
of $\mathcal{O}$; this provides a Bousfield-Kan homology spectral
sequence that computes the homology of a modular envelope.

In \cite{cyclic-formality}, Salvatore and the author prove that the
cyclic operad $\Hand_0$ is formal over the reals, in the sense that
$C_*(\Hand_0;\mathbb{R})$ and $H_*(\Hand_0;\mathbb{R}) = \mathcal{BV}$
(the Batalin-Vilkovisky algebra cyclic operad) are quasi-isomorphic as
cyclic operads in $\dgVect$.  Together with Theorem
\ref{handlebody-theorem}, this implies that the Bousfield-Kan spectral
sequence for the real homology of $\Hand$ collapses at the $E^2$ page.
See section \ref{cohomology-section} for more details.

The information contained in the Bousfield-Kan spectral sequence for a
derived modular envelope can be reorganised in an interesting way
using \emph{graph homology}.  The construction of graph homology was
first introduced by Kontsevich in \cite{Kont-Feynman} and
\cite{Kont-symplectic}; it was subsequently generalised to arbitrary
cyclic operads in \cite{Conant-Vogtmann}.  The construction takes a
cyclic operad in chain complexes as input and produces a chain complex
of graphs labelled by the operad as output.  Kontsevich observed that
the graph homology of the Lie algebra cyclic operad
$\mathcal{L}\hspace*{-1pt}\mathit{ie}$ computes the cohomology of
(outer) automorphism groups of free groups, and the graph homology of
$\Ass$ computes the cohomology of moduli spaces of Riemann surfaces
with boundary.

In general, given a cyclic operad $\mathcal{O}$ in $\Top$, we shall
show in section \ref{cohomology-section} that the results of
\cite{Lazarev-Voronov} imply that the cohomology of the derived
modular envelope $\hoMod_!  \mathcal{O}$ is computed by the graph
homology complex of the cyclic operad $D(C_*\mathcal{O})$ that is the
dg-dual of the cyclic operad $C_*\mathcal{O}$.  It is well-known that
$\Ass$ is self-dual: $D(C_*\Ass) \simeq C_*\Ass$.  Braun \cite{Braun}
has shown that $\InvAss$ is also self-dual.  Hence one recovers the
statement that the graph cohomology of $\Ass$ and $\InvAss$ computes
the cohomology of classifying spaces of diffeomorphism groups of
oriented and unoriented surfaces respectively (except in the case of a
M\"obius band or annulus without marked boundary intervals). Moving
from dimension 2 to 3, formality of $\Hand_0$ and Theorem
\ref{handlebody-theorem} imply
\begin{thmA}
  $D(\mathcal{BV})$ graph cohomology computes the real cohomology of the
  classifying spaces of diffeomorphism groups of 3-dimensional
  oriented handlebodies (except in the case of a solid annulus with no
  marked discs).
\end{thmA}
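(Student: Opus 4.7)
The plan is to deduce this as a formal consequence of three previously established ingredients: Theorem~\ref{handlebody-theorem}, the cyclic operad formality of $\Hand_0$ over $\mathbb{R}$ proved in \cite{cyclic-formality}, and the identification (invoked from \cite{Lazarev-Voronov} in section~\ref{cohomology-section}) of the real cohomology of a derived modular envelope $\hoMod_!\mathcal{O}$ with the graph cohomology of the dg-dual cyclic operad $D(C_*(\mathcal{O};\mathbb{R}))$.

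First I would apply Theorem~\ref{handlebody-theorem} to obtain an isomorphism $H^*(\Hand;\mathbb{R}) \cong H^*(\hoMod_!\Hand_0;\mathbb{R})$ on every component other than the exceptional one singled out in that theorem. Second, the cited consequence of \cite{Lazarev-Voronov} canonically identifies the right-hand side with the graph cohomology of the cyclic dg-operad $D(C_*(\Hand_0;\mathbb{R}))$. Third, formality produces a zig-zag of quasi-isomorphisms of cyclic dg-operads between $C_*(\Hand_0;\mathbb{R})$ and its homology $\mathcal{BV}$; applying dg-duality and then graph cohomology to this zig-zag yields the desired identification with $D(\mathcal{BV})$ graph cohomology, and chaining these three steps completes the proof.

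The main obstacle is ensuring the homotopy invariance used in the last step. One must verify that the dg-dual functor $D$ carries quasi-isomorphisms to quasi-isomorphisms, and that graph cohomology is invariant under quasi-isomorphisms of its input cyclic operad. Both are expected to be routine over a field of characteristic zero when the cyclic operad has finite-dimensional components in each arity and genus, which is the case for $\mathcal{BV}$: the graph complex is built from tensor products and symmetric-group coinvariants that behave well in this setting, and $D$ is essentially componentwise linear duality combined with a degree shift. A minor bookkeeping point is to check that the exceptional component excluded by Theorem~\ref{handlebody-theorem} matches the one excluded in the statement above; beyond this, the argument is simply the concatenation of the three cited results.
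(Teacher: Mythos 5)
Your proposal follows essentially the same route as the paper: Theorem~\ref{handlebody-theorem}, the formality of $\Hand_0$ over $\mathbb{R}$ from \cite{cyclic-formality}, and the identification of $H^*(\hoMod_!\mathcal{O}(*_n)_g)$ with the graph homology of $D\mathcal{O}$ via \cite{Lazarev-Voronov} are exactly the three ingredients the paper chains together, and your attention to quasi-isomorphism invariance of the graph complex matches the paper's remark that a quasi-isomorphism of cyclic operads induces a quasi-isomorphism of graph complexes. The argument is correct and no further comment is needed.
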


The operad $D(\mathcal{BV})$ has been studied to some extent in \cite{Vallette}.


\section{Graphs, ribbon graphs, and M\"obius graphs}

\subsection{Graphs} 
The language of operads is built upon the language of graphs.  We
therefore begin by laying out some basic definitions of graphs.  For
us, a graph $\gamma$ will be a 1-dimensional finite CW complex; we
allow graphs with multiple connected components and with isolated
(0-valent) vertices, but we require that each component has at least
one vertex of valence $\neq 1$.  A \textbf{leg} is a univalent vertex,
an \textbf{external edge} is an edge that meets a leg, and an
\textbf{internal edge} is one for which neither end is univalent.  We
write $E_{\mathit{int}}(\gamma)$ for the set of internal edges of
$\gamma$, and $V(\gamma)$ for the set of non-leg vertices.  The
\textbf{rank} of a graph is the first Betti number of the graph, and a
graph is a \textbf{tree} if it is of rank 0.

Given two graphs $\gamma_1, \gamma_2$, a \textbf{subgraph collapse}
$\gamma_1 \to \gamma_2$ is the isotopy class of a surjective cellular
map that restricts to a bijection on external edges and such that the
inverse image of each vertex in $\gamma_2$ is a connected subgraph of
$\gamma_1$.  A \textbf{tree collapse} is a subgraph collapse whose
vertex preimages are trees.  We define the following categories of
graphs.  Let $\Gext$ be the category of graphs and subgraph collapses,
and let $\G \subset \Gext$ denote the subcategory of graphs and tree
collapses.  For reasons that will become clear when we define modular
envelopes, we will write
\[
\Mod\co \G \hookrightarrow \Gext
\]
for the inclusion functor.  Given a finite set $P$ we will write
$\G(P) \subset \Gext(P)$ for the versions of these categories in which
the graphs have $|P|$ legs equipped with a bijection to $P$ and the
morphisms respect these labellings.  Let $*_P \in \Gext$ denote the
corolla with one leg for each element of $P$. We shall often encounter
the comma category $(\Mod \commacat *_P)$ of graphs over $*_P$, which
is equivalent to the full subcategory $\Gconn(P)$ of $\G(P)$ on all
connected graphs.

\subsection{Graphs without bivalent vertices}
A vertex is \textbf{essential} if it is not bivalent, or it is
bivalent with two legs.  A graph is said to be \textbf{reduced} if all
vertices are essential.  We will write $\Gred \subset \G$ for the full
subcategory of reduced graphs, and likewise for each of the other
categories of graphs defined above.

There is a reduction functor
\[
R \co \G \to \Gred
\]
given by replacing each pair of edges meeting at an inessential
bivalent vertex with a single edge.  To define $R$ on morphisms, note
that any tree collapse factors as a series of (non-loop) edge
contractions $\pi_e\co \gamma \to \gamma/e$.  If both endpoints of $e$
are essential then there is a unique edge $\overline{e}$ in
$R(\gamma)$ such that $R(\gamma/e) = R(\gamma)/\overline{e}$ and we
define $R(\pi_e) = \pi_{\overline{e}}$.  If $e$ has an inessential
endpoint then $R(\gamma) = R(\gamma/e)$ and $R(\pi_e)$ is the
identity.  For $\gamma \in \Gred$, consider the comma category $(R
\commacat \gamma)$ and the full subcategory $(R \commacat \gamma)_0$
on those objects for which the map to $\gamma$ is an isomorphism in
$\Gred$ and there are no bivalent vertices over external edges of
$\gamma$, and let
\[
J\co (R \commacat \gamma)_0 \hookrightarrow (R \commacat \gamma)
\]
denote the inclusion functor.  For the proof of Theorem
\ref{handlebody-theorem} we shall need to relate homotopy colimits
over $(R \commacat \gamma)$ to homotopy colimits over this subcategory.

\begin{lemma}
  For any object $x \in (R \commacat \gamma)$, the comma category $(x
  \commacat J)$ has a final object.
\end{lemma}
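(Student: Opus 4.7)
The plan is to exhibit a canonical final object $(z_0, h_0)$ of $(x \commacat J)$ in which $z_0 = (\gamma, \mathrm{id}_\gamma)$ (the graph $\gamma$ together with the identity morphism $R(\gamma) = \gamma \to \gamma$) and $h_0 \co y \to \gamma$ is a tree collapse in $\G$ lifting $f$. Since $\gamma$ itself has no inessential bivalent vertices, $z_0$ tautologically lies in $(R \commacat \gamma)_0$. The informal idea is that any object of $(x \commacat J)$ has underlying graph obtained from $\gamma$ by inserting bivalent chains only on internal edges, and absorbing those chains produces a canonical morphism to $(z_0, h_0)$.

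To build $h_0$, write $T \subset R(y)$ for the forest that $f$ contracts. Every edge $e$ of $R(y)$ corresponds to a chain in $y$ of inessential bivalent vertices joined by edges, all of which reduce to $e$. Define the forest $F \subset y$ to contract by: taking the entire chain over each edge of $T$; for each non-$T$ internal edge of $R(y)$, all but one edge of the corresponding chain in $y$ (absorbing its bivalent vertices and preserving the edge); and for each external edge of $R(y)$, the corresponding chain absorbed down to the single edge meeting the leg. Contracting $F$ yields $y/F$ canonically identified with $\gamma$, giving a tree collapse $h_0$ with $R(h_0) = f$.

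For finality, fix any $(z, h) = ((w, g), h \co y \to w)$ in $(x \commacat J)$. The defining conditions on $z$ force $w$ to be $\gamma$ with (possibly empty) chains of inessential bivalent vertices inserted on some internal edges, with $g$ implementing the identification $R(w) \cong \gamma$. Define $\phi \co w \to \gamma$ as the tree collapse absorbing each such chain to a single edge and matched against $\gamma$ via $g$; by construction $R(\phi) = g$, so $\phi$ is a morphism $z \to z_0$ in $(R \commacat \gamma)_0$. Both $\phi \circ h$ and $h_0$ are tree collapses $y \to \gamma$ sharing the reduction $g \circ R(h) = f = R(h_0)$, so the compatibility $\phi \circ h = h_0$ and the uniqueness of $\phi$ both reduce to the fact that a tree collapse whose target has no inessential bivalent vertices is determined by its $R$-reduction.

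This last identification is the step requiring the most care: different choices of which edge survives inside a bivalent-vertex chain in the source produce cellular maps that differ by an isotopy sliding the bivalent vertices along their chains, and hence represent the same isotopy class in the sense of the definition of morphisms in $\G$. Making this precise requires a small explicit local isotopy on each chain over a surviving edge of $\gamma$, and this is the main technical hurdle in the proof.
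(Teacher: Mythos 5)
There is a genuine gap, and it sits exactly where you flagged it. Your candidate final object $(\gamma,\id_\gamma)$ requires that a tree collapse onto a reduced graph be determined by its $R$-reduction, and this is false in the category $\G$ as the paper uses it. Take a chain of two edges $a_1,a_2$ meeting at an inessential bivalent vertex $v$ and lying over a single internal edge $e$ of $\gamma$: the collapse contracting $a_1$ and the collapse contracting $a_2$ contract different forests and send $v$ to the two different endpoints of $e$, and any isotopy between them through surjective cellular maps would have to drag the image of the vertex $v$ through the interior of $e$, at which point the map is no longer cellular. So these are two distinct morphisms, and the morphism from such an object to $(\gamma,\id_\gamma)$ is not unique; $(\gamma,\id_\gamma)$ is not terminal in $(x\commacat J)$, and in general no terminal object exists. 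This is not a removable technicality: Proposition \ref{simplicial-equiv-prop} identifies $(R\commacat\gamma)_0$ with $\prod_{e}\Delta_{semi}^{op}$, which has no terminal object (there are $n+1$ injections $[0]\hookrightarrow[n]$ in $\Delta_{semi}$), and the identification of $\mathbb{L}R_!\mathbb{L}\pi_!\,pt$ with an iterated two-sided bar construction in section \ref{bivalent-verts-section} depends precisely on these collapses being distinct face maps. If your ``main technical hurdle'' could be overcome, that whole mechanism would degenerate.

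The object the argument actually needs lies at the opposite end of $(x\commacat J)$. For $x=(R(\tau)\stackrel{\alpha}{\to}\gamma)$ one contracts only the forest $E$ of edges of $\tau$ that $\alpha$ sends to vertices of $\gamma$ (together with the redundant edges of the chains over external edges of $\gamma$), keeping the full bivalent chains over the internal edges of $\gamma$ intact; this is the paper's construction. Any object $(z,h)$ of $(x\commacat J)$ has underlying graph $\tau/F$ for a forest $F\supseteq E$, so it receives a unique morphism \emph{from} $\tau/E$ compatible with the structure maps (uniqueness because $\tau\to\tau/E$ is surjective). In other words $\tau/E$ is the minimally collapsed object, and it is this universal property --- every object of $(x\commacat J)$ admits a unique map from it --- that Proposition \ref{bivalent-to-simplicial-prop} uses: the natural transformation there goes from the identity to $J\circ W$, i.e.\ $W$ is left adjoint to $J$. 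Whatever one calls that object, your $(\gamma,\id_\gamma)$ is at the wrong end of the category and fails the uniqueness required of a universal object.
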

\begin{proof}
  Given a graph $\tau \in \G$, there is a canonical isotopy class of
  homeomorphisms $\tau \cong R(\tau)$.  Now, for an object $x=(R(\tau)
  \stackrel{\alpha}{\to} \gamma)$ of $(R\commacat \gamma)$, let $E$
  denote the set of edges $e$ of $\tau$ which are mapped by $\alpha$
  to vertices of $\gamma$ (note that this condition is isotopy
  invariant and hence well defined).  The set $E$ is necessarily a
  forest in $\tau$.  Hence there is a tree collapse $\tau \to \tau/E$
  and an induced tree collapse $R(\tau) \to R(\tau/E)$.  By
  construction, there is a unique isomorphism $R(\tau/E) \to \gamma$ such
  that the diagram in $\Gred$,
\begin{equation*}
\begin{diagram}
\node{R(\tau)} \arrow{e} \arrow{s} \node{R(\tau/E)} \arrow{sw}\\
\node{\gamma} 
\end{diagram}
\end{equation*}
commutes.  This diagram gives the desired final object. 
\end{proof}

\begin{proposition}\label{bivalent-to-simplicial-prop}
For any functor $F\co (R\commacat \gamma) \to \Top$, $J$ induces a
homotopy equivalence,
\[
\hocolim_{(R\commacat \gamma)_0} F \stackrel{\simeq}{\longrightarrow}
\hocolim_{(R\commacat \gamma)} F.
\]
\end{proposition}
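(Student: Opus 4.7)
The plan is to derive this proposition as an immediate application of the standard cofinality theorem for homotopy colimits. That theorem asserts the following: given a functor $J \co \mathcal{D} \to \mathcal{C}$ between small categories such that the nerve $B(x \commacat J)$ is contractible for every object $x \in \mathcal{C}$, the canonical map
\[
\hocolim_{\mathcal{D}} F \circ J \longrightarrow \hocolim_{\mathcal{C}} F
\]
is a weak equivalence for any functor $F \co \mathcal{C} \to \Top$. A convenient reference is Hirschhorn's \emph{Model Categories and Their Localizations}, Theorem 19.6.7; the original source is of course Bousfield--Kan.

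The hypothesis of the cofinality theorem is exactly what the preceding lemma supplies. The lemma establishes that $(x \commacat J)$ has a final object for every $x \in (R\commacat \gamma)$, and a small category with a terminal object has contractible nerve: the identity natural transformation from the identity functor to the constant functor at the terminal object induces a simplicial homotopy contracting the nerve onto the vertex corresponding to the terminal object.

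Applying the cofinality theorem to the inclusion $J \co (R\commacat \gamma)_0 \hookrightarrow (R\commacat \gamma)$ and to the given functor $F$ then produces the desired weak equivalence. There is really no obstacle to overcome: the geometric content has been absorbed into the preceding lemma, and the proposition is the clean categorical consequence. The only minor point to take care of is selecting a formulation of the cofinality theorem phrased for $\Top$-valued diagrams rather than simplicial-set-valued diagrams, but passage through geometric realisation or singular simplices bridges this gap immediately.
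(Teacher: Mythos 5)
Your proof is correct. The paper reaches the same conclusion from the same input (the preceding lemma) but by a more hands-on route: instead of quoting the cofinality theorem, it uses the final objects of the comma categories $(x \commacat J)$ to define a functor $W\co (R\commacat \gamma) \to (R\commacat \gamma)_0$ together with a natural transformation from $\id_{(R\commacat\gamma)}$ to $W$, and observes that this natural transformation induces a deformation retraction of $\hocolim_{(R\commacat \gamma)} F$ onto $\hocolim_{(R\commacat \gamma)_0} F$; that is, it constructs an explicit homotopy inverse to the map induced by $J$. This is essentially what the proof of the general cofinality theorem specialises to when the comma categories have genuine terminal objects rather than merely contractible nerves. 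Your version is more general and more modular --- it would go through unchanged if the lemma only supplied contractibility of $B(x \commacat J)$ --- at the cost of importing a black box together with its standing hypotheses (e.g.\ objectwise cofibrancy in Hirschhorn's formulation, and the passage between $\Top$- and simplicial-set-valued diagrams, both of which you rightly flag as harmless and which the paper's explicit construction simply sidesteps). Either argument is acceptable; in both, the entire geometric content lives in the preceding lemma.
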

\begin{proof}
  We define a homotopy inverse to the map induced by $J$.  Sending
  $x\in (R\commacat \gamma)$ to a final object of $(x \commacat J)$
  defines a functor $W\co (R\commacat \gamma) \to (R\commacat
  \gamma)_0$ and a natural transformation from $\id_{(R\commacat
    \gamma)}$ to $W$.  This natural transformation induces a
  deformation retraction of $\hocolim_{(R\commacat \gamma)} F$ onto
  $\hocolim_{(R\commacat \gamma)_0} F$.
\end{proof}

Let $\Delta$ denote the usual category of finite nonempty ordered sets
and weakly order-preserving maps. Let $\Delta_{semi} \subset \Delta$
denote the subcategory of all injective maps.

\begin{proposition}\label{simplicial-equiv-prop}
There is an equivalence of categories
\[
(R\commacat \gamma)_0 \simeq \prod_{e \in E_\mathit{int}(\gamma)} \Delta_{semi}^{op}.
\]
\end{proposition}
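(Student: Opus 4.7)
The plan is to construct an explicit functor $F \co \prod_{e \in E_{\mathit{int}}(\gamma)} \Delta_{semi}^{op} \to (R\commacat \gamma)_0$ and verify that it is essentially surjective and fully faithful. To get started, fix an orientation on each internal edge of $\gamma$. On objects, $F$ sends $([n_e])_e$ to the graph built from $\gamma$ by subdividing each internal edge $e$ with $n_e$ bivalent vertices, linearly ordered using the chosen orientation, equipped with the tautological isomorphism from its reduction to $\gamma$. The elements of $[n_e] = \{0, 1, \ldots, n_e\}$ are identified with the $n_e + 1$ sub-edges of the resulting chain over $e$. On a tuple of injective order-preserving maps $\varphi_e \co [m_e] \hookrightarrow [n_e]$ (i.e., a tuple of $\Delta_{semi}^{op}$-morphisms from $[n_e]$ to $[m_e]$), $F$ produces the tree collapse that keeps precisely the sub-edges in the image of each $\varphi_e$ and absorbs each maximal block of non-retained sub-edges into the adjacent surviving vertex of the target.

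Essential surjectivity is essentially built into the definition of $(R\commacat \gamma)_0$: an object is a graph $\tau$ together with an isomorphism $R(\tau) \cong \gamma$ such that all bivalent vertices of $\tau$ live over internal edges of $\gamma$; such a $\tau$ is a subdivision of $\gamma$ on its internal edges, and once the chosen orientations are used to label its bivalent vertices in each chain, it is equal on the nose to some $F([n_e])_e$. For full faithfulness, the key point is that a morphism $(\tau_1, \alpha_1) \to (\tau_2, \alpha_2)$ is a tree collapse $\pi \co \tau_1 \to \tau_2$ with $\alpha_2 \circ R(\pi) = \alpha_1$; because $\alpha_1, \alpha_2$ are isomorphisms in $\Gred$, $R(\pi)$ is forced to be a specific isomorphism $R(\tau_1) \cong R(\tau_2)$, so $\pi$ must respect the decomposition of each $\tau_i$ into chains over edges of $\gamma$ and must fix the non-bivalent vertices. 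Restricted to a single chain, $\pi$ is then a tree collapse fixing the chain's endpoints, and an easy analysis shows such a collapse is both determined by and uniquely determines its ordered subset of kept sub-edges, producing the desired injective order-preserving map.

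The main obstacle is the careful handling of chains over loop edges of $\gamma$, where the chain is a cycle meeting $\gamma$ at a single vertex rather than a path between two distinct vertices. Here the chosen orientation is used to linearize the cycle by breaking it at the loop vertex; one then has to check that tree collapses of such cyclic chains that preserve the loop structure at the vertex of $\gamma$ are still in natural bijection with injective order-preserving maps of sub-edge sets, the key observation being that the subchain containing the loop vertex that gets collapsed is a path through that vertex (and hence a tree) rather than a non-trivial cycle. Everything else is routine combinatorial bookkeeping about blocks of consecutive edges within each chain.
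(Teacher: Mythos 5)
Your proposal is correct and takes essentially the same approach as the paper: both identify the chain of subdivided edges of $\tau$ lying over each internal edge $e$ of $\gamma$ with a finite nonempty ordered set (via a chosen orientation of $e$), and match tree collapses with the injective order-preserving inclusions of the retained sub-edges. The paper states this in three sentences without constructing the explicit inverse functor or treating loop edges separately, so your write-up is simply a more detailed version of the same argument.
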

\begin{proof} 
  If $R(\tau)$ is isomorphic to $\gamma$ then, in $\tau$, over each
  edge $e$ of $\gamma$ there is a set $S$ of edges meeting at
  inessential bivalent vertices.  If one chooses an orientation of $e$
  then $S$ inherits an ordering and hence determines an object of
  $\Delta_{semi}^{op}$.  Contracting a subset $E$ of the edges of $S$
  determines an injective order-preserving map $S \smallsetminus E
  \hookrightarrow S$.  This determines the equivalence of categories.
\end{proof}

\subsection{Ribbon graphs and M\"obius graphs} 

Here we shall recall some basic facts about ribbon graphs and a variant
known as M\"obius graphs.

\begin{definition}
A \textbf{ribbon graph} is a graph equipped with a cyclic ordering of
the half-edges incident at each vertex.
\end{definition}

A ribbon graph $\gamma$ can be canonically thickened to an oriented
surface $S(\gamma)$ with boundary; legs of the ribbon graph correspond
to marked intervals on the boundary of the surface.
\begin{center}
\includegraphics{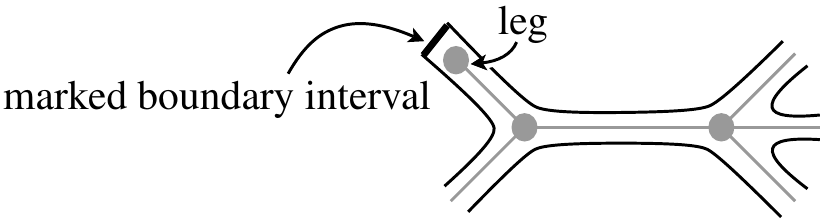}
\end{center}

Given an internal edge $e$ in a ribbon graph $\gamma$, one sees that
the graph $\gamma/e$ formed by contracting $e$ inherits a ribbon
structure from $\gamma$.  We thus let $\RG$ denote the category of
ribbon graphs and tree collapses that respect the ribbon structures.
Let $\RG(P)$ denote the category of ribbon graphs with legs labelled
by $P$.  

Let $\Sym\co \RG \to \G$ denote the functor that forgets the ribbon
structure.  The reason for the name will become clear in section
\ref{sym-section} when we discuss the cyclic operad generated by a
non-$\Sigma$ cyclic operad.  Observe that $(\Sym \commacat *_P)$ is
equivalent to the full subcategory $\RGtree(P) \subset \RG(P)$ of
ribbon trees.  This subcategory has several connected components, and
each component has a final object given by a ribbon corolla.

\subsection{M\"obius graphs}
We now define M\"obius graphs, which are a slight variation on ribbon
graphs and give models for nonorientable surfaces.  A
\textbf{pre-M\"obius structure} on a graph consists of:
\begin{enumerate}
\item a cyclic ordering on the half-edges incident at each vertex,
\item a labelling of the edges by elements of $\Z/2$.
\end{enumerate}
There is an equivalence relation on the set of pre-M\"obius structures on
a given graph $\gamma$ generated by the following operation: reverse
the cyclic order on the half-edges at a vertex and reverse the $\Z/2$
labels on all non-loop edges incident at that vertex.  
\begin{definition}
  A \textbf{M\"obius graph} is a graph equipped with an equivalence
  class of pre-M\"obius structures.
\end{definition}

A pre-M\"obius structure on a graph $\gamma$ determines a canonical
thickening to a (not necessarily orientable) surface $S(\gamma)$ with
marked directed intervals on the boundary corresponding to the legs;
the construction is the same as for ribbon graphs, except that an edge
labelled by the nontrivial element $1\in \Z/2$ now corresponds to a
strip that is glued in with a half twist, and a leg at the end of an
external edge labelled $1$ now gives a marked boundary interval
oriented opposite to the cyclic order at the other end of the edge, as
illustrated below.
\begin{center}
\includegraphics{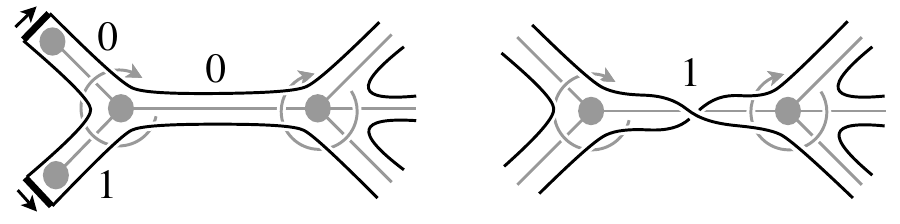}
\end{center}
For example, the M\"obius graph consisting of a single edge with both
ends meeting at a bivalent vertex thickens to the annulus $S^1\times
I$ if the edge is labelled by $0$, and it thickens to a M\"obius band
if the edge is labelled by $1$.  It is straightforward to see that if
two pre-M\"obius structures are equivalent then the corresponding
thickenings will be canonically homeomorphic.  Thus a M\"obius graph
has a well-defined thickening.

As with ribbon graphs, if the source of a tree collapse has a M\"obius
structure then the target inherits a well-defined structure and hence
there is a category $\mobG$ of M\"obius graphs and tree collapses that
respect the M\"obius structure.  Let $\mobG(P)$ denote the category of
M\"obius graphs with $P$ legs.  

As for ribbon graphs, let $\Sym'\co \mobG \to \G$ denote the functor
that forgets the M\"obius structure.  Observe that $(\Sym' \commacat
*_P)$ is equivalent to the full subcategory $\mobGtree(P) \subset
\mobG(P)$ of M\"obius trees.  This subcategory has several connected
components, and each component has a M\"obius corolla as final object.

\section{Definition and homotopy theory of cyclic and modular operads}
\label{operad-defs-section}

\subsection{A convenient definition for various flavours of operads}
Our perspective on cyclic and modular operads is heavily inspired by
that of Costello in \cite{Costello1}.  Below we shall give definitions of
cyclic and modular operads that are equivalent to any of the usual
definitions but are slightly more convenient for the homotopy
theory that we will be doing.

Forgetting the labelling of the legs on a graph gives a functor $\G(P)
\to \G$.  Given two distinct elements $i,j \in P$ there is a
\emph{gluing functor}
\[
glue_{i,j}\co \G(P) \to \G(P \smallsetminus \{i,j\}) 
\]
defined on objects by gluing the $i^{th}$ leg to the $j^{th}$ leg and
replacing the resulting pair of edges meeting at a bivalent vertex with a
single edge.
\begin{center}
\includegraphics{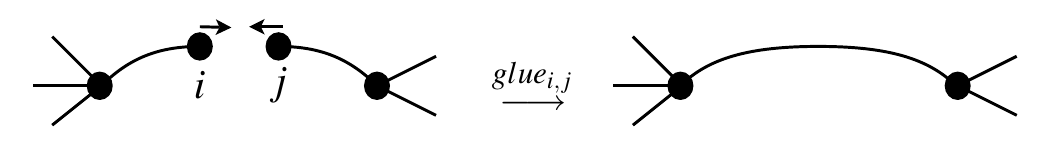}
\end{center}
Disjoint union of graphs makes $\G$ into a symmetric monoidal
category.  The monoidal product and gluing functors are also defined
for $\Gext$, ribbon graphs, and M\"obius graphs.  

\begin{definition}\label{operad-def}
  Let $(\mathscr{C},\otimes)$ be a symmetric monoidal category.
\begin{enumerate}
\item A \textbf{cyclic operad} in $\mathscr{C}$ is a symmetric monoidal functor
  $\mathcal{O}\co \G \to \mathscr{C}$ that commutes with all graph
  gluing functors (up to natural isomorphism),
  \[
  \begin{diagram}
    \node{\G(P)} \arrow{s,r}{glue_{i,j}} \arrow{e}
    \node{\G} \arrow{se,t}{\mathcal{O}} \\
    \node{\G(P\smallsetminus \{i,j\})} \arrow{e} \node{\G} \arrow{e,t}{\mathcal{O}}
    \node{\mathscr{C}.}
  \end{diagram}
  \]

\item A \textbf{modular operad} in $\mathscr{C}$ is a symmetric monoidal functor
  $\mathcal{O}\co \Gext \to \mathscr{C}$ that commutes with all graph
  gluing functors.

\item A \textbf{non-$\Sigma$ cyclic operad} in $\mathscr{C}$ is a symmetric
  monoidal functor $\mathcal{O}\co \RG \to \mathscr{C}$ that commutes
  with all gluing functors.

\item A \textbf{M\"obius cyclic operad} in $\mathscr{C}$ is a symmetric
  monoidal functor $\mathcal{O}\co \mobG \to \mathscr{C}$ that commutes
  with all gluing functors.
\end{enumerate}
\end{definition}

\begin{remark}
  Operads are often required to have a unit in the space
  $\mathcal{O}(*_2)$ associated with a bivalent corolla.  However, in
  this paper we do not impose this requirement.
\end{remark}
We sketch how this definition is equivalent to other definitions found
in the literature.  Observe that commuting with the gluing functors
implies that the values of a cyclic or modular operad $\mathcal{O}$
are determined by its values on disjoint unions of corollas.  Since it
is symmetric monoidal, its values are in fact determined by what it
does on single corollas.  Any subgraph collapse can be factored as a
sequence of edge contractions (non-loop edge contractions in the case
of a tree collapse), so the functor $\mathcal{O}$ is completely
determined by its values on corollas and what it does when contracting
the edge in a gluing of two corollas (and self-gluings of a single
corolla in the case of a modular operad).  Thus a cyclic operad in our
sense is equivalent to giving, for any finite set $P$, an object
$\mathcal{O}(P)$ with an action of $\Sigma_P$ and for any $i\in P,
j\in Q$ an operadic composition map
\[
_i\circ_j\co \mathcal{O}(P) \otimes \mathcal{O}(Q) \to \mathcal{O}(P\sqcup
Q \smallsetminus \{i,j\}),
\]
satisfying certain compatibility conditions.  A modular operad is a
functor for which contracting loops is allowed, so it additionally
comes with a composition map
\[
\circ_{ij}\co \mathcal{O}(P) \to \mathcal{O}(P\smallsetminus \{i,j\})
\]
for any $i,j\in P$.  We shall sometimes find it convenient to
informally define a cyclic or modular operad in these terms.

A morphism of cyclic or modular operads $\mathcal{O} \to \mathcal{P}$
is natural transformation of functors.  For operads in the category
$\Top$ of spaces, a morphism is said to be a homotopy equivalence if
it is pointwise a homotopy equivalence of spaces.  Similarly, a
morphism of operads in $\dgVect$ is said to be a quasi-isomorphism if
it is a quasi-isomorphism pointwise.

\subsection{Connected graphs as a modular operad}
Let $\Cat$ denote the category of all small categories with Cartesian
product as symmetric monoidal product.  An important example of a
modular operad is provided by categories of graphs themselves.  The
rule,
\[
P \mapsto \Gconn(P),
\]
constitutes a modular operad in $\Cat$, where the composition maps
$_i\circ_j$ and $\circ_{ij}$ are defined by gluing the $i$ leg to the
$j$ leg and forgetting the resulting bivalent vertex.  In terms of
Definition \ref{operad-def}, as a functor $\Gext \to \Cat$, this
modular operad sends $\gamma$ to $(\Mod \commacat \gamma)$.

The full subcategories $\Gtree(P) \subset \Gconn(P)$ of trees
constitute a cyclic operad but \emph{not} a modular operad.
Similarly, the categories $\RGtree(P) \simeq (\Sym \commacat *_P)$ and
$\mobGtree(P) \simeq (\Sym' \commacat *_P)$ of ribbon trees and
M\"obius trees respectively, constitute cyclic operads as well.

\subsection{Left Kan extensions and homotopy left Kan extensions}
Let $\mathscr{A}$, $\mathscr{B}$ and $\mathscr{C}$ be categories with
$\mathscr{C}$ cocomplete.  Consider functors 
\[
\begin{diagram}
\node{\mathscr{A}} \arrow{e,t}{F} \arrow{s,l}{G} \node{\mathscr{C}} \\
\node{\mathscr{B}.}
\end{diagram}
\]
Recall that the \textbf{left Kan extension} of $F$ along $G$ is a functor $G_! F\co
\mathscr{B} \to \mathscr{C}$ defined on objects by the colimit
\[
G_! F (b) = \colim_{(G \commacat b)} F\circ j_b,
\]
where $(G\commacat b)$ is the comma category of objects in
$\mathscr{C}$ over $b$ and $j_b\co (G \commacat b) \to \mathscr{C}$
forgets the morphism to $b$ (to simplify the notation we will often
omit writing $j_b$).  Left Kan extensions possess a universal property:  
the functor $G_!F$ comes with a natural transformation
\[
F \Rightarrow G_! F \circ P 
\]
that is initial among natural transformations from $F$ to functors
factoring through $P$.

If $\mathscr{C}$ is a Quillen model category (such as the category
$\Top$ of topological spaces) then there is a derived (homotopy
invariant) version known as the \textbf{homotopy left Kan extension}
$\mathbb{L}G_!F$; it is given by the formula
\[
\mathbb{L}G_! F (b) = \hocolim_{(G \commacat b)} F\circ j_b.
\]
There is a homotopy coherent version of the above universal property
for homotopy left Kan extensions.

Note that there is a ``Fubini theorem'' for ordinary and homotopy colimits,
\[
\colim_{\mathscr{A}} F \cong \colim_{\mathscr{B}} G_! F \mbox{\:\:\: and \:\:\:}
\hocolim_{\mathscr{A}} F \stackrel{\simeq}{\to} \hocolim_{\mathscr{B}} \mathbb{L}G_! F.
\]

\subsection{Homotopy colimits, diagrams in $\Cat$ and Thomason's Theorem}
At several points we shall be taking homotopy colimits of diagrams in
$\Top$ obtained from diagrams in $\Cat$ by applying the classifying
space functor $B$ (i.e. geometric realisation of the nerve) pointwise.
Here we briefly recall a couple of useful tools for this situation.

Given a functor $F\co \mathscr{C} \to \Cat$, the \textbf{Grothendieck
  construction} on $F$, denoted $\mathscr{C}\int F$ is the category in
which objects are pairs $(x\in \mathscr{C}, y\in F(x))$, and a
morphism $(x,y) \to (x',y')$ consists of an arrow $f\in
\hom_\mathscr{C}(x,x')$ and an arrow $g\in \hom_{F(x')}(f_*y,y')$.
By Thomason's Theorem \cite[Theorem 1.2]{Thomason}, there is a natural
homotopy equivalence,
\[
\hocolim_{\mathscr{C}} BF \stackrel{\simeq}{\longrightarrow} B\left( \mathscr{C} \int F \right).
\]

As a special case, if $\mathscr{C}$ is actually a group $G$ (a
category with a single object $*$ and all arrows invertible), then
$BF(*)$ is a space with a $G$ action, and $B(G\int \mathscr{F})$ is
homotopy equivalent to the homotopy quotient $(B\mathscr{F})_{hG}$.

If $\mathscr{C} = \Delta^{op}_{\mathit{semi}}$ then $F$ is a
semi-simplicial category, $BF$ is a semi-simplicial space, and
$B(\Delta^{op}_{\mathit{semi}} \int F) \simeq \hocolim BF$ is
equivalent to the geometric realisation of this semi-simplicial space.

\subsection{From non-$\Sigma$ or M\"obius cyclic operads to cyclic operads}\label{sym-section}
Given a non-$\Sigma$ cyclic operad $\mathcal{O}\co \RG \to
\mathscr{C}$, left Kan extension along $\Sym\co \RG \to \G$ gives a
cyclic operad $\Sym_!\mathcal{O}$ called the \textbf{symmetrisation} of
$\mathcal{O}$.  The fact that this is indeed a cyclic operad follows
easily from the properties of $\mathcal{O}$ and the fact that $\gamma
\mapsto (\Sym \commacat \gamma) \simeq \prod_{v\in V(\gamma)}
\RGtree(*_v)$ is a cyclic operad (where $*_v$ is the corolla in
$\gamma$ at $v$).  Explicitly, if $*_P$ is a corolla with set of legs
$P$, then
\[
\Sym_!\mathcal{O}(*_P) = \colim_{\RGtree(P)} \mathcal{O}.
\]

Likewise, if $\mathcal{O}\co \mobG \to \mathscr{C}$ is a M\"obius
cyclic operad then it has a symmetrisation defined by left Kan
extension along $\Sym'\co \mobG \to \G$.  In this case, the
symmetrisation sends $*_P$ to $\colim_{\mobGtree(P)} \mathcal{O}$.

For non-$\Sigma$ or M\"obius cyclic operads in a model category such
as $\Top$, there is a derived symmetrisation given by homotopy left
Kan extension. However, since each component of the categories $(\Sym
\commacat *_P)$ and $(\Sym' \commacat *_P)$ has a final object, the
derived symmetrisation is homotopy equivalent to the ordinary symmetrisation.

\subsection{Modular envelopes: from cyclic operads to modular
  operads}\label{mod-envelope-section}
The \textbf{modular envelope} of a cyclic operad $\mathcal{O}$ is the
modular operad given by the left Kan extension $\Mod_!\mathcal{O}$
along the functor $\Mod\co \G \hookrightarrow \Gext$.  This should be
thought of as the modular operad freely generated by $\mathcal{O}$.
As with symmetrisations, one can easily check that this indeed defines
a modular operad.  Explicitly, the modular envelope sends the $P$-legged
corolla $*_P$ to
\[
\Mod_!\mathcal{O}(*_P) = \colim_{\Gconn(P)} \mathcal{O}.
\]
The modular operad composition morphisms $_i\circ_j$ and $\circ_{ij}$
are induced by the corresponding composition morphisms on the categories
$(\Mod \commacat *_P) \simeq \Gconn(P)$.

When $\mathcal{O}$ is a cyclic operad in a model category such as
$\Top$ then there is a derived (homotopy invariant) version of the
above construction, known as the \textbf{derived modular envelope},
$\hoMod_!\mathcal{O}$.  Its value on a corolla $*_P$ is
\[
\hoMod_!\mathcal{O}(*_P) = \hocolim_{\Gconn(P)} \mathcal{O}.
\]

Unlike for symmetrisations of non-$\Sigma$ or M\"obius cyclic operads,
the derived modular envelope of a cyclic operad can often be
very different from the ordinary version of the modular envelope.  Our
focus in this paper is on derived modular envelopes.

\subsection{Genus grading in modular operads}
Often it is the case that a modular operad $\mathcal{O}$ has an
internal grading by ``genus''.  This means that the object
$\mathcal{O}(*_P)$ is a coproduct
\[
\mathcal{O}(P) = \coprod_{g\geq 0} \mathcal{O}_g(p),
\]
the composition morphism $_i\circ_j$ is degree zero in the sense that
it sends $\mathcal{O}_g(P) \times \mathcal{O}_h(Q)$ to
$\mathcal{O}_{g+h}(P \sqcup Q \smallsetminus \{i,j\})$ and the loop
contraction map $\circ_{ij}\co \mathcal{O}(P) \to
\mathcal{O}(P\smallsetminus \{i,j\})$ is degree one, meaning that it
sends the $g$ component to the $g+1$ component.

Note that the modular envelope and derived modular envelope of a
cyclic operad always have a genus grading.  This is
because the modular operad of categories of connected graphs, $*_P
\mapsto \Gconn(P)$ has a genus grading, with the genus $g$ piece
consisting of the component of graphs of rank exactly $g$.

If $\mathcal{O}$ is a modular operad with a genus grading then the
genus zero part $\mathcal{O}_0$ forms a cyclic operad.

\section{Various cyclic and modular operads}

\subsection{The cyclic operads $\Ass$ and $\InvAss$}

Let $\Ass$ denote the associative cyclic operad in $\Top$.  It is
constructed as the symmetrisation of the non-$\Sigma$ cyclic operad
that is the constant functor sending any ribbon graph to a single
point.  Thus $\Ass (*_P)$ is a discrete set of points with one point
for each ribbon corolla with $P$ legs, or equivalently, one point for
each cyclic ordering of $P$.

The hermitian associative operad, $\InvAss$, is defined as the
symmetrisation of the the M\"obius cyclic operad that sends any graph
to a point. Thus $\InvAss (*_P)$ can be identified with the set of
isomorphism classes of M\"obius corollas with $P$ legs.

\subsection{The open string moduli space modular operads $\OS$ and $\NOS$}
Here we construct modular operads from mapping class groups of
oriented and unoriented surfaces with marked intervals on their
boundary and ``open string'' gluing.

Let $S$ be a compact connected surface with nonempty boundary and
equipped with $n\geq 0$ intervals marked on its boundary and
parametrised by $[0,1]$.  We do not require that $S$ is orientable,
but if is equipped with an orientation then we require that the
intervals be parametrised compatibly with the orientation.  Such a
surface is called \textbf{admissible} if it is neither (1) a disc with
strictly fewer than 2 marked boundary intervals, nor (2) an annulus or
M\"obius band with zero marked boundary intervals.  (We will be forced
to consider the annulus and M\"obius band because, although they are
not admissible, they can be formed by gluing together admissible
surfaces.)

Let $\Diff(S)$ denote the group of diffeomorphisms that fix the marked
intervals pointwise and preserve the orientation if $S$ is equipped
with an orientation.  By \cite{Earle-Eells}, \cite{Earle-Schatz} and
\cite{Gramain}, if $S$ is admissible then the components of $\Diff(S)$
are contractible. The mapping class group $\MCG(S)$ is the group
$\pi_0 \Diff(S)$ of isotopy classes of diffeomorphisms.  The identity
component of the diffeomorphism group of an annulus or M\"obius band
is homotopy equivalent to $S^1$.

To form a modular operad we replace the mapping class groups with
equivalent \emph{groupoids}.  This construction is a variation of the
construction used in \cite{Tillmann-surface-operad}.  Given a finite
set $P$ (possibly empty), let $\MCGoid(P)$ denote the groupoid in
which:
\begin{itemize}
\item Objects are admissible oriented surfaces with marked intervals
  in bijection with $P$.  If $P=\emptyset$ then we also allow an annulus.
\item Morphisms are isotopy classes of orientation-preserving
  diffeomorphisms that respect the marked intervals.
\end{itemize}
Similarly, let $\NMCGoid(P)$ denote the unoriented version of the
above category (when $P=\emptyset$ then we allow both the annulus and
the M\"obius band as objects).  Sending the corolla $*_P$ to
$\MCGoid(P)$ or $\NMCGoid(P)$ endows these two families of groupoids
each with the structure of a modular operad in $\Cat$ with the
composition morphisms induced by gluing pairs of marked intervals
together via a direction-reversing identification $[0,1] \to [0,1]$.

For $P\neq \emptyset$, the components of the space $B\MCGoid(P)$ (or
$B\NMCGoid(P)$) have the homotopy type of $B\MCG(S)$ for various
admissible surfaces $S$.  Note that this statement fails when $P$ is
empty for the components corresponding to the annulus or M\"obius
band.

\begin{definition}
  The oriented open string modular operad $\OS$ is given by $\OS(*_P)
  = B\MCGoid(P)$.  Similarly, the unoriented open string modular
  operad $\NOS$ is given by $\NOS(*_P) = B\NMCGoid(P).$
\end{definition}

The modular operads $\OS$ and $\NOS$ have genus gradings; however,
\emph{these gradings do not correspond to the genera of the surfaces.}
Rather, the genus $g$ part of $\OS(*_P)$ is the classifying space of
the subgroupoid $\MCGoid_g(P)$ of surfaces with first Betti number $b_1=g$.

\begin{proposition}\label{os-gzero-part}
  There are homotopy equivalences of cyclic operads, 
\[
\OS_0 \simeq \Ass \mbox{\:\:\:\: and \:\:\:\:} \NOS_0 \simeq \InvAss.
\]
\end{proposition}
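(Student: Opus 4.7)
The plan is to identify $\OS_0$ and $\NOS_0$ essentially on the nose with $\Ass$ and $\InvAss$ respectively, by explicitly classifying the surfaces that appear in the genus-zero groupoids and computing their mapping class groups.

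First I would note that a connected compact surface with nonempty boundary and first Betti number $b_1 = 0$ is a 2-disc, so the objects of $\MCGoid_0(P)$ (respectively $\NMCGoid_0(P)$) are oriented (respectively unoriented) discs with $|P|$ marked intervals on the boundary. Admissibility forces $|P| \geq 2$; the annulus and M\"obius band appearing at $P = \emptyset$ have $b_1 = 1$ and so live in the genus-one piece, not genus zero. Thus for $|P| < 2$ both sides of the claimed equivalence consist of corollas that do not arise, and in the remaining range $|P| \geq 2$ there is genuine content.

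Next, by the cited results of Earle-Eells, Earle-Schatz and Gramain, the relative diffeomorphism group of a disc with $|P| \geq 2$ fixed boundary intervals has contractible components; in particular the mapping class group is trivial. Consequently $B\MCGoid_0(P)$ and $B\NMCGoid_0(P)$ are each homotopy equivalent to their set of path components, and it remains to identify $\pi_0$ of these groupoids with $\Ass(*_P)$ and $\InvAss(*_P)$. An oriented disc with $P$ marked directed boundary intervals is determined up to isomorphism by the induced cyclic ordering of $P$, giving the bijection with $\Ass(*_P)$. In the unoriented case, a choice of local orientation of the disc produces a cyclic ordering together with a $\Z/2$-label on each leg recording whether the prescribed direction of the interval agrees with the induced boundary orientation; reversing the local orientation simultaneously reverses the cyclic order and flips every label, which is exactly the equivalence relation defining a M\"obius corolla, so $\pi_0 \NMCGoid_0(P) \cong \InvAss(*_P)$.

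Finally I would verify that the bijections just constructed respect the cyclic operad composition. Gluing two oriented discs along marked intervals yields a disc whose boundary cyclic ordering is the evident composite of the two inputs, matching the composition in $\Ass$. In the unoriented case, the direction-reversing identification used to glue marked intervals in $\NOS$ must be shown to correspond, under the bijection, to the convention for the $\Z/2$-label on the new edge produced by gluing two M\"obius corollas in $\mobG$. I expect this last compatibility check to be the main organisational obstacle: one has to unwind the convention for the $\Z/2$-label attached to a glued edge, including how that label interacts with reduction of the resulting two-vertex graph to a corolla via an internal tree collapse, and match it against the orientation convention for the boundary identification of intervals. Apart from this bookkeeping, the statement is entirely forced by the disc classification together with contractibility of the relevant diffeomorphism groups.
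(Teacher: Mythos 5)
Your argument is correct and is essentially the paper's own proof read in the opposite direction: the paper identifies $\Ass(*_P)$ with $B(\Sym\commacat *_P)$, the classifying space of the groupoid of ribbon corollas, and observes that the thickening functor gives an equivalence of groupoids onto $\MCGoid_0(P)$, which is exactly your classification of genus-zero admissible surfaces as discs with trivial mapping class group together with the cyclic-ordering (resp.\ M\"obius) bookkeeping. The composition-compatibility you flag as the main organisational obstacle is the same check the paper dispatches as ``easily seen,'' so nothing substantive differs.
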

\begin{proof}
  By definition of the cyclic associative operad, 
  \[\Ass(*_P) = \Sym_! pt (*_P) \simeq \mathbb{L}\Sym_!
  pt (*_P) = B(\Sym \commacat *_P),
  \]
  and $(\Sym \commacat *_P)$ is the groupoid of ribbon corollas with
  set of legs $P$.  Sending a ribbon corolla to its thickening defines
  an equivalence of groupoids $(\Sym \commacat *_P) \simeq
  \MCGoid_0(P)$.  This gives a homotopy equivalence $\Ass(*_P) \simeq
  \OS_0(*_P)$ that is easily seen to be compatible with the cyclic
  operad structures.  The argument of $\InvAss$ is the same.
\end{proof}

\subsection{The handlebody modular operad $\Hand$}

Let $K=K_{g,n}$ be a 3-dimensional compact connected oriented
handlebody of genus $g\geq 0$ with $n\geq 0$ disjoint 2-discs marked
on its boundary.  The boundary $\partial K$ is a closed surface of
genus $g$ with $n$ embedded discs.  We call $K$ \textbf{admissible} if
$(g,n) \neq (0,0), (0,1), (1,0)$.  (Note that excluding the case of
genus zero with one marked disk is not strictly necessary, but it will
make the exposition somewhat clearer.)  We will be forced to consider
one exceptional handlebody, namely the solid torus with zero marked
boundary discs; this handlebody is not admissible but it can arise
when gluing boundary discs on admissible handlebodies.

If $K$ is admissible then the components of the diffeomorphism group
of $K$ (fixing the marked discs pointwise) are contractible
\cite{Hatcher}; when $K$ is a solid torus with no marked boundary
discs then the identity component of the diffeomorphism group is
homotopy equivalent to $S^1\times S^1$.  Let $\HH(K)$ denote the group
of isotopy classes of diffeomorphisms of $K$ that fix the discs
pointwise, and let $\MCG(\partial K)$ denote the group of isotopy
classes of diffeomorphism of $\partial K$ fixing the discs pointwise.
Restriction of diffeomorphisms to the boundary of $K$ is injective on
the level of isotopy classes, so one can regard $\HH(K)$ as a subgroup
$\MCG(\partial K)$ \cite{Hatcher}.  Thus $\HH(K)$ is often called the
\textbf{handlebody subgroup} of the mapping class group.  Note that
for positive genus the handlebody subgroup depends on the choice of
how to realise a given surface as the boundary of a handlebody, but
any two choices determine conjugate subgroups.  However, when the
genus is zero then the handlebody subgroup is equal to the entire
mapping class group.

We will now construct a modular operad $\Hand$ from classifying
spaces of handlebody groups with the compositions given by gluing
marked discs together.  However, as in the previous section we must
replace the groups with equivalent groupoids.  Given a finite set $P$
(possibly empty), let $\mathscr{H}(P)$ denote the groupoid in which:
\begin{itemize}
\item Objects are 3-dimensional compact connected oriented admissible
  handlebodies with an identification of the marked boundary discs
  with $\coprod_P D^2$.  If $P= \emptyset$ then we also allow the
  (non-admissible) solid torus as an object.
\item Morphisms are isotopy classes of orientation-preserving
  diffeomorphisms that respect the identification of the marked
  boundary discs.
\end{itemize}
Fix an orientation reversing diffeomorphism $D^2 \to D^2$.  Given
$i\in P$ and $j\in Q$, gluing the $i$ disc to the $j$ disc using the
fixed diffeomorphism defines a functor $_i\circ_j\co
\mathscr{H}(P)\times \mathscr{H}(Q) \to \mathscr{H}(P\sqcup Q
\smallsetminus \{i,j\})$.  The self-gluing functor $\circ_{ij}$ is
defined similarly.

\begin{definition}
  The handlebody modular operad, $\Hand$, is the functor $\Gext \to
  \Top$ defined on objects by sending a corolla $*_P$ with set of legs
  $P$ to the space $B\mathscr{H}(P)$ and defined on morphisms by gluing
  handlebodies together at the marked discs using the above gluing
  functors.
\end{definition}

The handlebody modular operad clearly has a genus grading.
Restricting to the genus zero part of the handlebody groupoids gives a
cyclic operad $\Hand_0(*_P) = B\mathscr{H}_0(P)$.

\begin{proposition}
The framed little 2-discs operad $f\mathcal{D}_2$ is homotopy
equivalent to $\Hand_0$.
\end{proposition}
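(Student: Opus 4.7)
The plan is to identify both operads, arity by arity, with a common model: framed embeddings of parametrised 2-discs on the boundary of a 3-ball. Throughout I assume $|P| \geq 2$, the admissibility condition; the cases $|P| = 0, 1$ are already excluded from $\Hand_0$.

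First I would replace the groupoid $\mathscr{H}_0(P)$, whose morphisms are isotopy classes of diffeomorphisms, by the topological groupoid with the same objects but the full $C^\infty$-topology on morphism sets. Because the components of $\Diff^+(D^3, \coprod_P D^2)$ are contractible by the theorem of Hatcher cited earlier, discretisation induces a homotopy equivalence on classifying spaces. The groupoid has a single isomorphism class (a 3-ball with $|P|$ parametrised marked discs), so its classifying space is naturally $B\Diff^+(D^3, \coprod_P D^2)$, where the diffeomorphisms are orientation-preserving and fix each marked disc pointwise.

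Next I would fit this classifying space into two fibration sequences. Fix a distinguished $0 \in P$ and a standard parametrised disc $D^2_0 \subset \partial D^3$. Restriction to the boundary and evaluation on $D^2_0$ gives a fibration
\[ \Diff^+(D^3, D^2_0) \longrightarrow \Diff^+(D^3) \longrightarrow \Emb^{\mathit{fr}}(D^2, S^2). \]
By Hatcher's theorem the middle term is $\simeq SO(3)$, and the base $\Emb^{\mathit{fr}}(D^2, S^2)$ deformation retracts onto the oriented frame bundle $SO(3)$ of $S^2$; the map is an equivalence, so the fibre $\Diff^+(D^3, D^2_0)$ is contractible. Now evaluate the remaining marked discs:
\[ \Diff^+(D^3, \coprod_P D^2) \longrightarrow \Diff^+(D^3, D^2_0) \longrightarrow \Emb^{\mathit{fr}}\Bigl(\coprod_{P \smallsetminus \{0\}} D^2,\, \partial D^3 \smallsetminus D^2_0\Bigr). \]
This is a principal bundle with contractible total space for the structure group $\Diff^+(D^3, \coprod_P D^2)$, so its base is homotopy equivalent to $B\Diff^+(D^3, \coprod_P D^2)$. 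Finally, $\partial D^3 \smallsetminus D^2_0$ is an open disc, and the space of framed embeddings of $|P|-1$ discs into an open disc is a standard model for $f\mathcal{D}_2(|P|-1)$.

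The main technical hurdle is showing that the second evaluation map is a Serre fibration: this is a parametrised isotopy extension statement in dimension 3, requiring that a smooth family of disjoint framed disc embeddings into $\partial D^3 \smallsetminus D^2_0$ lifts to a family of diffeomorphisms of $D^3$ fixing $D^2_0$ pointwise. This is standard, following from an ambient isotopy argument with smooth extension into the interior. Once the arity-wise equivalences are in hand, compatibility with the cyclic operad structure is essentially tautological: both compositions are realised by gluing along chosen pairs of parametrised boundary discs, and the equivalences intertwine these gluings by construction.
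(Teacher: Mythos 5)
Your argument is correct in outline, but it takes a genuinely different route from the paper. The paper never leaves the world of discrete groupoids: it uses that the spaces $f\mathcal{D}_2(n)$ are aspherical, so that $f\mathcal{D}_2 \simeq B\Pi(f\mathcal{D}_2)$ for the fundamental groupoid $\Pi$, and then writes down a single explicit functor $\Pi(f\mathcal{D}_2(n)) \to \mathscr{H}_0(\mathit{Legs}(*_{n+1}))$ (place the configuration on the northern hemisphere of $\partial D^3$ and standardise the southern hemisphere), which it checks is an equivalence of categories compatible with composition; the only $3$-dimensional input there is at the level of $\pi_0$ (that the handlebody group of a ball with marked boundary discs is the ribbon braid group, which needs Cerf's $\pi_0\Diff(D^3\ \mathrm{rel}\ \partial)=0$ plus surface mapping class group facts). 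You instead work with the topological groups themselves, delooping $\Diff^+(D^3,\coprod_P D^2)$ inside the contractible group $\Diff^+(D^3,D^2_0)$ to identify $B\mathscr{H}_0(P)$ directly with a framed embedding space; this is more geometric, and it reproves asphericity of $\Hand_0(*_P)$ along the way rather than assuming asphericity of $f\mathcal{D}_2$, but it leans on the full Smale conjecture $\Diff(D^3)\simeq O(3)$ and on the Palais--Cerf fibration theorems for restriction maps of embedding spaces. The one place where you undersell the work is the final step: your identification $\Emb^{\mathit{fr}}(\cdots)\simeq B\Diff^+(D^3,\coprod_P D^2)$ is a zigzag through a Borel (two-sided bar) construction, so making it a map of operads is not ``essentially tautological'' --- you must equip the intermediate Borel constructions with compatible compositions, with the usual collar and rescaling issues for embedding-space operads, whereas the paper's single functor of groupoids makes compatibility immediate. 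Relatedly, $f\mathcal{D}_2$ is only an operad and not a cyclic operad on the nose, so what should be checked is compatibility with the operad structure, the cyclic structure on $\Hand_0$ being the extra structure that the equivalence then transports. Neither point is a gap in your argument; it is simply the price of the route you chose.
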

\begin{proof}
  Let $\Pi\co \Top \to \Cat$ denote the fundamental goupoid functor.
  Since the spaces of $f\mathcal{D}_2$ are all $K(\pi,1)$s (the groups
  $\pi$ here are ribbon braid groups), there is a homotopy equivalence
  of operads 
  \[
  B\Pi(f\mathcal{D}_2) \simeq f\mathcal{D}_2.
  \]
  We define a morphism of operads $\Pi(f\mathcal{D}_2) \to
  \mathscr{H}_0$ in $\Cat$ as follows.  Write $f\mathcal{D}_2(n)$ for
  the space of $n$ framed little discs in a standard disc.  An object
  of $\Pi(f\mathcal{D}_2(n))$ is a configuration of $n$ parametrised
  discs in a standard disc.  Such a configuration determines an object
  of $\mathscr{H}_0(\mathit{Legs}(*_{n+1}))$ by thinking of the
  standard disc as the northern hemisphere on the boundary of a 3-ball
  and choosing a standard parametrisation of the southern hemisphere.
  It is not hard to see that a path in $f\mathcal{D}_2(n)$ determines
  an isotopy class of diffeomorphisms between the 3-balls
  corresponding to the endpoints.  Hence we have the desired functor.
  Moreover, it is not hard to see that this functor is an equivalence
  of categories and is compatible with the operadic compositions.
\end{proof}

\section{Proofs of Theorems \ref{ribbon-graphs} and \ref{mobius-graphs}}

The argument we present for Theorems \ref{ribbon-graphs} and
\ref{mobius-graphs} is a simplified version of the proof of Theorem
\ref{handlebody-theorem} that we present in the next section.

Regarding $\OS$ as a cyclic operad for a moment, there is an
inclusion of cyclic operads, $\OS_0 \hookrightarrow \OS$.  Since the
target is actually a modular operad, the universal property of the
homotopy left Kan extension provides a map of modular operads,
\[
\Phi\co \hoMod_!\OS_0 \to \OS.
\]
We will show that this map is an equivalence.  Similarly, in the
unoriented setting, there is a map $\widetilde{\Phi} \co
\hoMod_!\NOS_0 \to \NOS$ that we will show to be a homotopy
equivalence.  It suffices to show that these maps are homotopy
equivalences when evaluated on corollas $*_p$.

Let us first describe the above maps more explicitly.  By Thomason's
Theorem,
\[
\hoMod_!\OS_0(*_P) \simeq B\left(\Gconn(P) \int \MCGoid_0(P) \right), 
\]
An object of the category on the right is a graph $\tau \in \Gconn(P)$
and a labelling of each internal vertex $v$ by a surface $S_v\in
\MCGoid_0(\In(v))$ that is a disc with one marked boundary interval for
each half-edge incident at $v$ (note that this is in essence just the
datum of a ribbon graph structure on $\tau$).  Gluing these labels
together according to the edges of $\tau$ gives an object $S \in
\MCGoid(P)$ and thus defines a functor
\[
\phi\co \Gconn(P) \int \MCGoid_0(P) \to \MCGoid(P).
\]
After taking classifying spaces, this map gives a model for $\Phi$ up
to homotopy.

For the proof, we will replace $\MCGoid(P)$ with a homotopy equivalent
category, lift $\phi$ to this new category, and show that the lift is
actually an equivalence of categories.

\subsection{Arc systems}

Let $S$ be a surface with $n$ marked boundary intervals.  An
\textbf{arc} in $S$ is an unparametrised curve in $S$ that meets
$\partial S$ only at its endpoints, which are required to be disjoint
from any of the marked boundary intervals.  Given an arc $C$, we may
cut $S$ along $C$ to obtain a new (possibly disconnected surface) with
$n+2$ marked boundary intervals coming from the original $n$ marked
intervals plus an additional marked interval on either side of the
cut, as shown below.
\begin{center}
\includegraphics{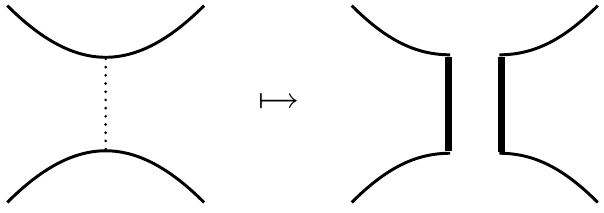}
\end{center} 

\begin{definition}
  An \textbf{arc system} $\alpha$ in $S$ is a nonempty set of disjoint
  arcs $C_i$ in $S$ such that if one cuts $S$ along all of the $C_i$
  then each component of the result is an admissible surface.  An arc
  system $\alpha$ in $S$ is said to be \textbf{complete} if each
  component of the result of cutting $S$ along the arcs in $\alpha$ is
  an admissible disc.
\end{definition}

Isotopy classes of arc systems form a category in which there is one
arrow $\alpha \to \beta$ for each way of forgetting some of the arcs
of $\alpha$ and identifying the isotopy class of the remaining arcs
with $\beta$.  Let $\mathscr{A}(S)$ denote the category of isotopy
classes of arc systems in $S$, and let $\mathscr{A}_0(S)$ denote the
full subcategory of complete arc systems.

\begin{proposition}\label{arc-contractibility}
  For any admissible surface $S$, the spaces $B\mathscr{A}(S)$ and
  $B\mathscr{A}_0(S)$ are contractible.
\end{proposition}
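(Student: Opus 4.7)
The plan is to reduce both statements to the classical contractibility of the arc complex.

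For $B\mathscr{A}(S)$: the poset $\mathscr{A}(S)$ has as its classifying space the barycentric subdivision of the simplicial arc complex $\mathcal{A}(S)$, whose $k$-simplices are arc systems with $k{+}1$ arcs. Contractibility of $\mathcal{A}(S)$ for admissible $S$ is the classical theorem of Harer, Hatcher, and Penner, proved via a surgery flow against a fixed reference arc. The identification of $B\mathscr{A}(S)$ with this subdivision is straightforward since the arrows of $\mathscr{A}(S)$ encode exactly subset relations among arc systems.

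For $B\mathscr{A}_0(S)$: I would apply Quillen's Theorem A to the inclusion $\iota\co \mathscr{A}_0(S) \hookrightarrow \mathscr{A}(S)$. Given $\alpha \in \mathscr{A}(S)$, the comma category $(\iota \commacat \alpha)$ is the poset of complete arc systems $\beta$ containing $\alpha$. Cutting $S$ along $\alpha$ produces admissible pieces $S_1,\ldots,S_k$, and an extension $\beta \supseteq \alpha$ amounts to an independent choice of complete arc system on each $S_i$, allowing the empty system when $S_i$ is already an admissible disc (in which case the empty system is terminal in that factor). This identifies $(\iota \commacat \alpha)$ with a product $\prod_i \tilde{\mathscr{A}}_0(S_i)$, which is shown to be contractible by induction on a topological complexity measure of $S$ that strictly decreases on every piece whenever $\alpha$ is nonempty. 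Theorem A then upgrades $B\iota$ to a homotopy equivalence, and contractibility of $B\mathscr{A}_0(S)$ follows from that of $B\mathscr{A}(S)$.

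The main obstacle will be the bookkeeping in the inductive step: one must choose a complexity measure that strictly decreases on each cut piece, handle the degenerate base cases (such as discs with two marked boundary intervals, whose $\mathscr{A}_0$ is not a singleton), and verify admissibility of the cut pieces throughout. A conceptually cleaner alternative would be to adapt Hatcher's flow argument directly to both $\mathscr{A}$ and $\mathscr{A}_0$: fix a reference complete system $\alpha_0$, show that the sub-poset of arc systems compatible with $\alpha_0$ has contractible nerve, and exhibit a canonical surgery flow deformation-retracting $B\mathscr{A}_0(S)$ onto this sub-poset. Either route avoids any need to analyse the fine combinatorics of triangulations of surfaces beyond what the classical contractibility theorem already provides.
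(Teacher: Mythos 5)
Your proposal is correct and follows essentially the same route as the paper: contractibility of $B\mathscr{A}(S)$ is quoted from the classical arc-complex result (Hatcher's surgery-flow argument), and contractibility of $B\mathscr{A}_0(S)$ is deduced via Quillen's Theorem A applied to $\iota\co \mathscr{A}_0(S) \hookrightarrow \mathscr{A}(S)$, identifying $(\iota \commacat \alpha)$ with a product of complete-arc-system categories on the cut pieces and inducting on the lexicographically ordered complexity $(b_1, n)$. Your explicit handling of the empty system on pieces that are already admissible discs, and the reminder that the paper's arc systems allow parallel arcs (so the identification with the classical arc complex needs the small adjustment the paper itself flags), are the only points where the two write-ups differ in emphasis.
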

\begin{proof}
  The argument of \cite{Hatcher-triang} shows that $\mathscr{A}$ is
  contractible (note that our arc systems are slightly different from
  the arc systems considered by Hatcher since we allow parallel arcs,
  but it is straightforward to deal with this difference).  We will
  show that the inclusion $\iota\co \mathscr{A}_0 \hookrightarrow
  \mathscr{A}$ is a homotopy equivalence on classifying spaces by
  Quillen's Theorem A \cite{Quillen} and induction on the complexity
  of $S$.  Let $b_1$ be the first Betti number of $S$ and let $n$
  denote the number of boundary components.  Order the pairs $(b_1,
  n)$ lexicographically.  If $S$ is a disc ($b_1=0$) then
  $\mathscr{A}_0(S) = \mathscr{A}(S)$.  Now suppose that
  $B\mathscr{A}_0(T)\simeq pt$ for all admissible surfaces $T$ with
  $(b_1(T),n(T)) < (b_1(S),n(S))$.  Given any arc system $\alpha \in
  \mathscr{A}(S)$, let $\{S_i\}_{i\in I}$ denote the set of connected
  admissible surfaces resulting from cutting $S$ along the arcs in
  $\alpha$.  Observe that each component has $(b_1(S_i), n(S_i)) <
  (b_1(S), n(S))$.  There is an equivalence of categories,
  \[
  (\iota \commacat \alpha) \simeq \prod_{i\in I} \mathscr{A}_0(S_i),
  \]
  and hence, by the inductive hypothesis, $B(\iota \commacat \alpha)$
  is contractible.  Thus $\iota$ induces a homotopy equivalence of
  classifying spaces.
\end{proof}
\begin{remark}
  In the exceptional cases of an annulus or M\"obius band with zero
  marked boundary intervals, the space of complete arc systems is not
  contractible.
\end{remark}

We can regard $S \mapsto \mathscr{A}_0(S)$ as a functor $\MCGoid \to
\Cat$ or $\NMCGoid \to \Cat$ (depending on whether $S$ is equipped
with an orientation or not), and then form the Grothendieck
constructions $\MCGoid(P) \int \mathscr{A}_0$ and $\NMCGoid(P) \int
\mathscr{A}_0$.  By Proposition \ref{arc-contractibility} and
Thomason's Theorem, we have:

\begin{lemma}
The projections
\begin{align*}
\MCGoid(P) \int \mathscr{A}_0 \to \MCGoid(P) \mbox{\:\:\:\: and \:\:\:\:}
\NMCGoid(P) \int \mathscr{A}_0 \to \NMCGoid(P)
\end{align*}
are homotopy equivalences, except on the components corresponding to
an annulus or M\"obius band if $P=\emptyset$.
\end{lemma}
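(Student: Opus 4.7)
\emph{Proof proposal.} The plan is to reduce the lemma to Proposition \ref{arc-contractibility} by viewing the projection as the classifying-space model of a quasifibration of Borel constructions. I would begin by noting that both $\MCGoid(P)$ and $\NMCGoid(P)$ are groupoids, so each decomposes as a disjoint union of connected components, one for each isomorphism class of surface $S$; the projection respects this decomposition, so it suffices to treat one component at a time.

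On the component corresponding to a surface $S$, the functor $\mathscr{A}_0$ restricts to an action of the mapping class group $\MCG(S)$ on the category $\mathscr{A}_0(S)$. By the groupoid case of Thomason's theorem recalled in the section on homotopy colimits, the projection
\[
\MCGoid(P) \int \mathscr{A}_0 \longrightarrow \MCGoid(P)
\]
realises on this component to the Borel construction fibration
\[
(B\mathscr{A}_0(S))_{h\MCG(S)} \longrightarrow B\MCG(S),
\]
whose fibre is $B\mathscr{A}_0(S)$. Proposition \ref{arc-contractibility} tells us that this fibre is contractible whenever $S$ is admissible, so the projection restricted to the admissible components is a homotopy equivalence. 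The unoriented statement is completely parallel with $\NMCGoid$ in place of $\MCGoid$.

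The excluded components are precisely those for which $\mathscr{A}_0(S)$ fails to be contractible, namely the annulus (in both the oriented and unoriented settings) and the M\"obius band (in the unoriented setting) when $P = \emptyset$, as flagged by the remark following Proposition \ref{arc-contractibility}. Since the substantive contractibility work has already been carried out there, I do not expect any real obstacle in this lemma; the only care needed is in invoking the correct version of Thomason's theorem and in keeping track of which components of $\MCGoid(P)$ and $\NMCGoid(P)$ fall under the admissibility hypothesis.
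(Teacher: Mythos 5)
Your argument is correct and is essentially the paper's own: the paper deduces the lemma directly from Proposition \ref{arc-contractibility} together with Thomason's Theorem, which is exactly what you do, just spelled out componentwise via the Borel-construction special case. Your accounting of the excluded components (annulus in both settings, M\"obius band only in the unoriented one, and only when $P=\emptyset$) also matches the definitions of $\MCGoid$ and $\NMCGoid$.
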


Now observe that $\phi$ lifts to a functor $\phi'\co \Gconn(P) \int
\MCGoid_0 \to \MCGoid(P) \int \mathscr{A}_0$ by sending an
$\MCGoid_0$-labelled graph to the surface obtained by gluing the
labels and the complete arc system with one arc for each pair of
boundary intervals that were glued, as illustrated below.
\begin{center}
\includegraphics{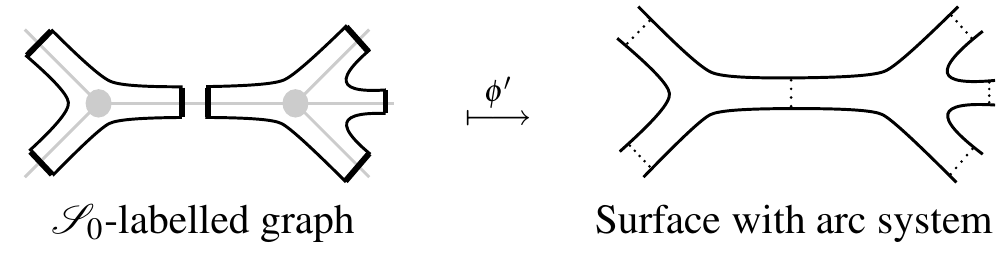}
\end{center}
Likewise, $\widetilde{\phi}$ lifts to $\widetilde{\phi}' \co
\Gconn(P) \int \NMCGoid_0 \to \NMCGoid(P) \int \mathscr{A}_0$.  This
next lemma now completes the proof of Theorems \ref{ribbon-graphs} and
\ref{mobius-graphs}.

\begin{lemma}\label{ribbon-final-lemma}
The functors $\phi'$ and $\widetilde{\phi}'$ are equivalences of categories.
\end{lemma}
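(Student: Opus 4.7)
The plan is to construct an explicit inverse to $\phi'$ (and to $\widetilde{\phi}'$) using the dual graph of a complete arc system; I present the argument for $\phi'$, since the M\"obius variant is identical modulo forgetting orientations.

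\emph{Essential surjectivity.} Given $(S,\alpha) \in \MCGoid(P) \int \mathscr{A}_0$, cutting $S$ along the arcs of $\alpha$ produces a disjoint union of admissible discs $\{D_v\}_{v\in V}$ by definition of completeness. Form a graph $\tau$ with vertex set $V$, with one internal edge for each arc of $\alpha$ (joining the two pieces it borders, possibly giving parallel edges), and with legs labelled by $P$ inherited from the marked boundary intervals of $S$. Connectedness of $S$ forces $\tau$ to be connected, so $\tau \in \Gconn(P)$. The discs $D_v$, with marked boundary intervals (arc sides together with original $P$-markings) in natural bijection with the half-edges at $v$, provide vertex labels in $\MCGoid_0$. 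By construction, $\phi'$ sends this labelled graph back to $(S,\alpha)$ up to a canonical isomorphism.

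\emph{Full faithfulness.} A morphism $(\tau, \{S_v\}) \to (\tau', \{S'_{v'}\})$ in the source consists of a tree collapse $f\co \tau \to \tau'$ together with, for each $v'$, an isomorphism identifying $S'_{v'}$ with the gluing of the discs $\{S_v : v\mapsto v'\}$ along the contracted subtree $f^{-1}(v')$. Applying $\phi'$ gives an isomorphism $S \cong S'$ under which the internal edges of $\tau'$ correspond to a subset $\alpha' \subseteq \alpha$, namely the arcs that survive the collapse. Conversely, given a morphism $(g,h)\co (S,\alpha) \to (S',\alpha')$ in the target, so that $g\co S \to S'$ is an isomorphism with $g_*\alpha \supseteq \alpha'$, the dual graph construction applied to $(S,\alpha)$ and to $(S',\alpha')$ recovers $\tau$ and $\tau'$, and forgetting the arcs in $g_*\alpha \smallsetminus \alpha'$ determines a cellular contraction $\tau \to \tau'$. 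Restricting $g$ to unions of pieces across the contracted subgraphs furnishes the disc isomorphisms at each $v'$. The two composites are naturally isomorphic to the identity essentially tautologically, since taking the dual graph of the arc system inside the glued surface recovers the original labelled graph.

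The main obstacle is to verify that the contraction $\tau \to \tau'$ just described is a genuine \emph{tree} collapse and not merely a subgraph collapse: the edges to be forgotten must form a forest in $\tau$. This is exactly where completeness of both $\alpha$ and $\alpha'$ is essential: if a cycle of forgotten arcs existed, then gluing the corresponding discs of the $\alpha$-cut along those arcs would yield a piece of the $\alpha'$-cut with strictly positive first Betti number, contradicting the hypothesis that $\alpha'$ is complete, i.e.\ that each piece of the $\alpha'$-cut is an admissible disc. For $\widetilde{\phi}'$ the argument is word-for-word the same: $\NMCGoid_0$ labels are unoriented discs, any non-orientability of the glued surface is automatically captured by the gluing combinatorics at the edges, and the dual graph reconstruction is insensitive to the orientation datum.
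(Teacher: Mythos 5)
Your proof is correct and follows essentially the same route as the paper's: both rest on cutting the surface along its complete arc system to recover the labelled dual graph, which gives essential surjectivity and fullness, with faithfulness coming (implicitly, in your ``tautological'' step) from the triviality of the automorphism groups of admissible discs in $\MCGoid_0$ and $\NMCGoid_0$. Your explicit check that the forgotten arcs form a forest in the dual graph --- so that the induced graph morphism is a genuine tree collapse rather than just a subgraph collapse --- is a worthwhile detail that the paper's three-line proof leaves unstated.
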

\begin{proof}
  By cutting surfaces along complete arc systems we see that $\phi'$
  and $\widetilde{\phi}'$ are both essentially surjective and full.
  Faithfulness follows from the fact that the automorphism group of any
  admissible disc (i.e., an object of $\MCGoid_0(P)$ or
  $\NMCGoid_0(P)$) is trivial.
\end{proof}

\section{Proof of Theorem \ref{handlebody-theorem}}\label{handlebody-proof}

The argument begins along the same lines as the proof of Theorems
\ref{ribbon-graphs} and \ref{mobius-graphs} in the previous section.
We replace surfaces with handlebodies and arcs with discs.  The only
significant difference is in the analogue of Lemma
\ref{ribbon-final-lemma}, where the analogous functor is not an
equivalence of categories, but a somewhat more involved argument shows
that it is nevertheless still a homotopy equivalence on classifying
spaces.

\subsection{Outline of the proof}
The universal property for the homotopy left Kan extension applied to
the inclusion of cyclic operads $\Hand_0 \hookrightarrow \Hand$
provides a morphism of modular operads
\[
\Theta\co \mathbb{L}\Mod_!\Hand_0 \to \Hand
\]
that we will show to be a homotopy equivalence (except on the unmarked
solid torus component).  Since both sides are determined by their
values on corollas, it suffices to check that $\Theta$ is a homotopy
equivalence whenever one evaluates both modular operads on a corolla
$*_P$.

We first describe $\Theta$ at the level of categories and groupoids.
Recall that $\Hand(*_P) = B\mathscr{H}(P)$ where $\mathscr{H}(P)$ is
the groupoid of admissible oriented handlebodies with parametrised
boundary discs labelled by $P$ (plus a solid torus if $P=\emptyset$),
and $\Hand_0(*_P) = B\mathscr{H}_0(P)$, where $\mathscr{H}_0(P)$ is
the subgroupoid of genus zero handlebodies (i.e. balls with labelled
discs on their boundary).  By Thomason's Theorem, there is a homotopy
equivalence
\[
\mathbb{L}\Mod_!\Hand_0(*_P) \simeq B\left( \Gconn(P) \int
  \mathscr{H}_0 \right);
\]
note that an object of the Grothendieck construction above is a
connected graph $\tau$ with legs labelled by $P$ and each non-leg
vertex labelled by a genus zero handlebody with boundary discs
corresponding to the half-edges incident at the vertex.  The map $\Theta$
is (up to homotopy) induced by the functor
\[
\theta \co \Gconn(P) \int \mathscr{H}_0 \to \mathscr{H}(P)
\]
that glues the genus zero handlebodies at the vertices together as
prescribed by $\tau$ to form an object of the groupoid
$\mathscr{H}(P)$.  

We will lift $\theta$ to a refined functor $\theta'$ mapping into a
homotopy equivalent category that is more amenable to comparison,
namely the category of handlebodies equipped with collections of
2-discs that divide them into genus zero admissible pieces.  The
source of $\theta'$ projects to the category $\Gconn(P)$ by forgetting
the labellings of the vertices.  The target category will also project
to $\Gconn(P)$ by taking the dual graph to a complete disc system.  We
will prove that $\theta'$ is a homotopy equivalence by composing these
projections with the graph reduction functor $R\co \Gconn(P) \to
\Gconnred(P)$ and making a fibrewise comparison over $\Gconnred(P)$.

\subsection{Disc systems}\label{disc-systems-section}

Let $K=K_{g,n}$ be a connected handlebody of genus $g$ with $n$ discs
marked on its boundary.  A \textbf{cutting disc} in $K$ is a disc $D$
properly embedded in $K$ (i.e. the interior is in the interior of $K$
and the boundary is in boundary of $K$) with $\partial D$ disjoint
from the $n$ marked discs in $\partial K$.  We may cut $K$ along $D$
to obtain a (possibly disconnected handlebody) with $n+2$ marked discs
on its boundary coming from the original $n$ discs plus an additional
disc on either side of the cut.

\begin{definition}
  A \textbf{disc system} in $K$ is a set of disjoint cutting discs
  $D_i$ in $K$ such that if one cuts $K$ along all of the $D_i$ then
  each component of the result is an admissible handlebody.  A disc
  system in $K$ is said to be \textbf{complete} if the result of
  cutting $K$ along the discs in the system consists of only genus
  zero pieces.
\end{definition}

Given a disc system $\alpha$ in $K$, there are two associated groups.
The first is the group 
\[
\mathrm{Stab}(\alpha) \subset \mathcal{H}(K)
\]
of isotopy classes of diffeomorphisms of $K$ fixing the marked
boundary discs and disc system $\alpha$ (more precisely, the
diffeomorphism must send $\alpha$ to itself without permuting the
discs of the system).  To define the second group, let $K_\alpha$
denote the result of cutting $K$ along the discs of $\alpha$, and
write $\{K_{\alpha,j}\}$ for the components of $K_\alpha$; we regard
each component $K_{\alpha,j}$ as a handlebody with marked boundary
discs.  The second group is
\[
\prod_j \mathcal{H}(K_{\alpha,j}).
\]

\begin{proposition}\label{gluing-kernel-prop}
  Gluing the components of $K_\alpha$ back together induces a
  homomorphism
\[
\prod_j \mathcal{H}(K_{\alpha,j}) \to \mathrm{Stab}(\alpha).
\]
This homomorphism is surjective, and its kernel is the free abelian
subgroup freely generated by the elements that simultaneously Dehn
twist in opposite directions around each of the cuts.
\end{proposition}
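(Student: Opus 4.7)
The plan is to exhibit the map as arising from the restriction fibration of diffeomorphism groups along the disc system $\alpha$, and to read off the conclusion from the associated long exact sequence.

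Well-definedness and the homomorphism property are immediate: each $\phi_j\in\mathcal{H}(K_{\alpha,j})$ fixes all marked boundary discs of $K_{\alpha,j}$ pointwise, including the two new discs produced by each cut along $\alpha$, so the glued map is smooth across $\alpha$, fixes the original marked discs, and preserves $\alpha$ without permuting its discs. For surjectivity, given $[\phi]\in\mathrm{Stab}(\alpha)$ each restriction $\phi|_{D_i}$ is an orientation-preserving self-diffeomorphism of a disc, hence isotopic to the identity since $\Diff^+(D^2)\simeq SO(2)$ is connected. Choosing a collar of $D_i$ in $K$ disjoint from the other discs of $\alpha$ and from the original marked boundary discs, this isotopy extends to an ambient isotopy of $\phi$; doing this for each $i$ and then cutting along $\alpha$ produces the required preimage in $\prod_j\mathcal{H}(K_{\alpha,j})$.

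For the kernel, consider the restriction fibration
\[
\Diff(K,\alpha\text{ pt}) \longrightarrow \Diff(K,\alpha\text{ setwise, np}) \xrightarrow{\mathrm{res}} \prod_{D_i\in\alpha}\Diff^+(D_i),
\]
in which all three groups additionally fix the original marked boundary discs of $K$ pointwise, and ``np'' means that isotopies do not permute the discs of $\alpha$. Admissibility of $K$ and \cite{Hatcher} make the components of $\Diff(K,\text{marked discs})$ contractible, hence the same holds for its subspace in the middle. Since $\Diff^+(D^2)\simeq S^1$ gives $\pi_0=0$ and $\pi_1=\mathbb{Z}$, the long exact sequence collapses to
\[
0\longrightarrow \mathbb{Z}^n \longrightarrow \pi_0\Diff(K,\alpha\text{ pt}) \longrightarrow \mathrm{Stab}(\alpha) \longrightarrow 0
\]
with $n=|\alpha|$. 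A standard cut-and-paste identification gives $\pi_0\Diff(K,\alpha\text{ pt}) \cong \prod_j\mathcal{H}(K_{\alpha,j})$, so the kernel is a free abelian group of rank $n$.

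It remains to recognise the generators geometrically. The $i$-th generator of $\pi_1\prod_i\Diff^+(D_i)=\mathbb{Z}^n$ is the $2\pi$-rotation loop of $D_i$. Lifting it through a two-sided collar $D_i\times(-\epsilon,\epsilon)\subset K$ via a bump function $\rho$ with $\rho(0)=1$ and $\rho(\pm\epsilon)=0$ produces the path $\Psi_t(x,s)=(R_{2\pi t\rho(s)}(x),s)$, whose endpoint $\Psi_1$ fixes $D_i$ pointwise (since $R_{2\pi}=\mathrm{id}$) and performs a disc-twist in each of the two collar halves. Because the two halves induce opposite orientations on $D_i$ from the ambient orientation of $K$, after cutting along $\alpha$ the endpoint $\Psi_1$ becomes precisely a pair of Dehn twists around the two copies of $\partial D_i$ in $K_{\alpha,j^+}$ and $K_{\alpha,j^-}$ running in opposite directions. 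I expect the main obstacle to be verifying that $\mathrm{res}$ really is a Serre fibration (via a parametrised tubular-neighbourhood argument) and carrying out the orientation bookkeeping across the collar to confirm the ``opposite directions'' claim.
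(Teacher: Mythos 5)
The paper states Proposition \ref{gluing-kernel-prop} without proof, so there is no argument of the author's to compare yours against; your fibration strategy is the natural one and is, in outline, how one would expect this to be proved. Well-definedness, surjectivity, and the identification of the connecting map $\pi_1\bigl(\prod_i \Diff^+(D_i)\bigr)\to\pi_0(\text{fibre})$ with the opposite disc-twist pairs are all correct: the lift $\Psi_t$ through a two-sided collar is exactly the right computation, and the orientation bookkeeping does come out as claimed because the two sides of $D_i$ induce opposite boundary orientations on the two new marked discs. One point you should make explicit in the surjectivity step: you need to know that a class in $\mathrm{Stab}(\alpha)$ does not exchange the two sides of any $D_i$, since otherwise $\phi|_{D_i}$ need not lie in $\Diff^+(D_i)$ and the statement would actually fail. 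This is the intended reading (elements of $\mathrm{Stab}(\alpha)$ should act trivially on the dual graph), but the paper's phrase ``without permuting the discs'' does not literally exclude reversing an edge, so it is worth a sentence.

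The one genuine gap is the assertion that the middle term $\Diff(K,\alpha\ \text{setwise})$ has contractible components ``because it is a subspace'' of $\Diff(K\ \text{rel marked discs})$: contractibility of components is not inherited by subspaces (consider $S^1\subset D^2$). This step is not cosmetic --- it is exactly what makes the connecting homomorphism $\mathbb{Z}^n\to\pi_0\Diff(K,\alpha\ \text{pt})$ injective, i.e.\ what makes the kernel \emph{freely} generated by the $n$ twist elements rather than merely generated by them (a quotient of $\mathbb{Z}^n$). To repair it, use the orbit-map fibration $\Diff(K,\alpha\ \text{setwise})\to\Diff(K\ \text{rel marked discs})\to\mathcal{E}(\alpha)$, where $\mathcal{E}(\alpha)$ is the space of disc systems isotopic to $\alpha$; Hatcher's results on spaces of essential discs (the same input underlying Proposition \ref{contractibility-lemma}) give contractibility of the relevant component of $\mathcal{E}(\alpha)$, and combined with contractibility of the components of $\Diff(K\ \text{rel marked discs})$ for admissible $K$ this forces $\pi_1$ of the middle term to vanish, as you need. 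Alternatively one could prove injectivity by showing directly that the $n$ opposite-twist elements are independent in $\prod_j\mathcal{H}(K_{\alpha,j})$, but that requires a separate and fiddlier analysis of disc twists in the pieces, particularly at bivalent vertices of the dual graph, so the fibration route is preferable once the $\pi_1$ statement is properly justified.
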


As with arc systems, isotopy classes of disc systems in $K$ form a
category $\mathscr{D}(K)$ in which there is an arrow $\alpha \to
\beta$ for each way of identifying $\beta$ with a subset of the discs
of $\alpha$ up to isotopy.  Let $\mathscr{D}_0(K)$ denote the
subcategory of complete disc systems.

\begin{proposition}\label{contractibility-lemma}
  If $K$ is admissible then $B\mathscr{D}(K) \simeq B\mathscr{D}_0(K)
  \simeq *$.
\end{proposition}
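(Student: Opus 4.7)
The plan is to mirror the proof of Proposition \ref{arc-contractibility}, replacing arcs in surfaces with cutting discs in handlebodies. I proceed in two steps: first establish contractibility of $B\mathscr{D}(K)$, then show that the inclusion $\iota\co \mathscr{D}_0(K) \hookrightarrow \mathscr{D}(K)$ induces a homotopy equivalence on classifying spaces.

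For the first step, contractibility of $B\mathscr{D}(K)$ reduces to the well-known contractibility of the disc complex of an admissible handlebody. This can be established either by adapting Hatcher's flow argument from the arc setting to 3-dimensions, or by invoking results from the 3-manifold literature (e.g.\ McCullough's work on handlebody groups). This is precisely the point where admissibility is essential: the disc complex of a solid torus with no marked boundary discs is homotopy equivalent to a circle, which forces that case to be excluded from Theorem \ref{handlebody-theorem}.

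For the second step, I apply Quillen's Theorem A to $\iota$, proceeding by induction on a suitable complexity measure for admissible handlebodies (e.g.\ lexicographic order on $(g,n)$). The base case is a 3-ball $K_{0,n}$ with $n\geq 2$ marked discs, where $\mathscr{D}_0(K)=\mathscr{D}(K)$ since any admissible disc system in a ball already partitions it into genus zero pieces. For the inductive step, given $\alpha \in \mathscr{D}(K)$ let $K_{\alpha,1}, \ldots, K_{\alpha,k}$ denote the admissible components obtained by cutting along $\alpha$. There is an equivalence of categories
\[
(\iota \commacat \alpha) \simeq \prod_{i=1}^k \mathscr{D}_0(K_{\alpha,i}),
\]
so by the inductive hypothesis each $B\mathscr{D}_0(K_{\alpha,i})$ is contractible, $B(\iota \commacat \alpha)$ is contractible, and Quillen's Theorem A yields the homotopy equivalence.

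The main obstacle will be choosing a complexity measure so that every admissible cut strictly reduces complexity on all resulting pieces. A non-separating cut takes $(g,n)$ to $(g-1,n+2)$, lex-smaller; a separating cut produces pieces with $g_1+g_2=g$ and $n_1+n_2=n+2$. In the favourable cases both pieces have strictly smaller genus, but edge cases where one piece retains the full genus and loses only a single marked disc (for example, cutting off a $K_{0,2}$ piece) require a more careful argument -- either a refined measure such as a weighted sum of $g$ and $n$, or an observation that such cuts can be handled directly by noting the corresponding disc is isotopically trivial relative to the remaining disc system. This is the analogue of the subtle bookkeeping that appears in the induction of Proposition \ref{arc-contractibility}.
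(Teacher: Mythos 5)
Your proposal follows essentially the same route as the paper: contractibility of $B\mathscr{D}(K)$ is quoted from McCullough's theorem on the disc complex (or obtained by adapting Hatcher's argument to discs), and contractibility of $B\mathscr{D}_0(K)$ is then deduced by the same Quillen's Theorem A induction used for Proposition \ref{arc-contractibility}. Your additional care over the choice of complexity measure only makes explicit a bookkeeping point that the paper leaves implicit in its appeal to that earlier induction.
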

\begin{proof}
  Contractibility of $B\mathscr{D}(K)$ is the content of \cite[Theorem
  5.3]{McCullough} --- the proof given there does not mention the
  setting in which $K$ has a nonzero number of marked boundary discs,
  but the same argument works without modification.  Alternatively,
  one can adapt the argument of \cite{Hatcher-triang} to discs.
  Contractibility of $B\mathscr{D}_0(K)$ then follows from an
  induction completely analogous to the proof of Lemma
  \ref{arc-contractibility}.
\end{proof}
(In the exceptional case of a solid torus, neither $B\mathscr{D}(K)$ nor
$B\mathscr{D}_0(K)$ is contractible).

We can regard $K \mapsto \mathscr{D}_0(K)$ as a functor
$\mathscr{D}_0\co \mathscr{H}(P) \to \Cat.$
One can then form the Grothendieck construction $\mathscr{H}(P) \int
\mathscr{D}_0$ Proposition \ref{contractibility-lemma} and Thomason's
Theorem then give,

\begin{lemma}
  The projection $\mathscr{H}(P) \int \mathscr{D}_0 \to
  \mathscr{H}(P)$ is a homotopy equivalence (except on the component
  corresponding to a solid torus when $P=\emptyset$).
\end{lemma}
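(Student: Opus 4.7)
The plan is to apply Thomason's theorem to translate the projection of Grothendieck constructions into a map of homotopy colimits, and then exploit the componentwise contractibility of $B\mathscr{D}_0(K)$ established in Proposition \ref{contractibility-lemma}.

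First, by Thomason's theorem applied to the functor $\mathscr{D}_0 \co \mathscr{H}(P) \to \Cat$, there is a natural homotopy equivalence
\[
B\bigl(\mathscr{H}(P) \int \mathscr{D}_0\bigr) \simeq \hocolim_{\mathscr{H}(P)} B\mathscr{D}_0,
\]
under which the classifying space of the forgetful projection corresponds to the map
\[
\hocolim_{\mathscr{H}(P)} B\mathscr{D}_0 \longrightarrow \hocolim_{\mathscr{H}(P)} * \;\simeq\; B\mathscr{H}(P)
\]
induced by the terminal natural transformation $B\mathscr{D}_0 \Rightarrow *$.

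Next, since $\mathscr{H}(P)$ is a groupoid, both homotopy colimits decompose as disjoint unions indexed by the isomorphism classes of objects of $\mathscr{H}(P)$. For each such class, pick a representative handlebody $K$ with automorphism group $\HH(K)$. By the group-action special case of Thomason's theorem noted earlier in the paper, the corresponding component of $B(\mathscr{H}(P) \int \mathscr{D}_0)$ is identified with the Borel construction $(B\mathscr{D}_0(K))_{h\HH(K)}$, while the corresponding component of $B\mathscr{H}(P)$ is $B\HH(K)$; the projection restricts on these components to the map induced by $B\mathscr{D}_0(K) \to *$.

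Finally, Proposition \ref{contractibility-lemma} gives $B\mathscr{D}_0(K) \simeq *$ whenever $K$ is admissible, so the Borel construction collapses to $B\HH(K)$ and the restricted projection is a homotopy equivalence on that component. The sole exceptional component is that of the solid torus with $P = \emptyset$, where $B\mathscr{D}_0(K)$ fails to be contractible, and this is precisely the case excluded from the statement. The essential input is Proposition \ref{contractibility-lemma}; the remainder is routine bookkeeping, and the only subtlety is verifying that the Grothendieck projection matches the hocolim-level map induced by $B\mathscr{D}_0 \Rightarrow *$, which is immediate from naturality of Thomason's equivalence.
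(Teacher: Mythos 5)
Your proposal is correct and follows the same route as the paper, which derives the lemma directly from Proposition \ref{contractibility-lemma} together with Thomason's Theorem; you have merely spelled out the intermediate step of decomposing over isomorphism classes and identifying each component with a Borel construction. No gaps.
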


The functor $\theta \co \Gconn(P)\int \mathscr{H}_0 \to \mathscr{H}(P)$
lifts to
\[
\theta' \co \Gconn(P)\int \mathscr{H}_0 \to \mathscr{H}(P) \int \mathscr{D}_0
\]
as follows.  An object in the source category is a graph with vertices
labelled by genus zero handlebodies.  An object of the target is a
handlebody equipped with a complete disc system.  The functor $\theta$
was defined by gluing the genus zero handlebodies at the vertices
together as prescribed by the graph; we define $\theta'$ by the same
procedure, except that we now remember where the gluings were performed
to obtain a complete disc system in the resulting handlebody.

\subsection{From disc systems to dual graphs}

A complete disc system $\alpha$ in a handlebody $K$ with boundary
discs labelled by $P$ determines a \textbf{dual graph} $\Gamma(\alpha)
\in \Gconn(P)$ as follows.  Let $K_\alpha$ denote the result of
cutting $K$ along the discs of $\alpha$.  To build the graph
$\Gamma(\alpha)$, put a vertex for each connected component of
$K_\alpha$ and an edge for each cutting disc $D$ of $\alpha$; the
endpoints of the edge corresponding to $D$ are the vertices
corresponding to the components on either side of $D$.  Finally, for
each $p\in P$ add a leg (an edge terminating in a univalent vertex)
with label $p$ at the vertex corresponding to the component on which
the corresponding boundary disc lies.  Sending a complete disc system
to its dual graph defines a functor
\[
\Gamma\co \left(\mathscr{H}(P) \int \mathscr{D}_0 \right) \to \Gconn(P).
\]

Consider $\mathbb{L}\pi_! pt \co \Gconn(P) \to \Top$, where $\pi\co
\Gconn(P) \int \mathscr{H}_0 \to \Gconn(P)$ is the projection functor,
and $pt$ is the singleton-valued constant functor.  The functor
$\theta'$ induces a natural transformation
\[
\theta'' \co \mathbb{L}\pi_! pt \to \mathbb{L}\Gamma_! pt.
\]
Note that this transformation is not generally a pointwise homotopy equivalence
of functors. Let us examine these functors more closely.

\begin{lemma}\label{identification-of-groups-lemma}
Let $\tau$ be a graph in $\Gconn(P)$.  Choose a handlebody $K \in
\mathscr{H}(P)$ and a disc system $\alpha$ in $K$ such that the dual
graph $\Gamma(\alpha)$ is isomorphic to $\tau$. 
\begin{enumerate}
\item $\mathbb{L}\pi_! pt (\tau) \simeq B(\pi \commacat \tau) \simeq
  \prod_j B\mathcal{H}(K_{\alpha,j})$, and edge contractions
  correspond to gluings.
\item $\mathbb{L}\Gamma_! pt (\tau) \simeq B(\Gamma \commacat \tau)
  \simeq B\mathrm{Stab}(\alpha)$.  Contracting an edge in $\tau$
  corresponds to the morphism $\alpha \to \alpha'$ that forgets the
  corresponding cutting disc, and the induced map is given by
  including the stabiliser of $\alpha$ into the stabiliser of
  $\alpha'$.
\item Under these homotopy equivalences, the map $\theta'' \co
  \mathbb{L}\pi_! pt (\tau) \to \mathbb{L}\Gamma_! pt (\tau)$ corresponds to
  the map of Proposition \ref{gluing-kernel-prop}.
\end{enumerate}
\end{lemma}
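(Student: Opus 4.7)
The plan is to verify the three statements by identifying the comma categories $(\pi\commacat \tau)$ and $(\Gamma\commacat \tau)$ with familiar groupoids via adjunction arguments in the spirit of Proposition \ref{bivalent-to-simplicial-prop}. For (1), the equivalence $\mathbb{L}\pi_! pt(\tau) \simeq B(\pi\commacat \tau)$ is the standard formula for the homotopy left Kan extension of a constant functor. To identify $B(\pi\commacat \tau)$ with $\prod_j B\mathcal{H}(K_{\alpha,j})$, I would construct an adjoint pair between $(\pi\commacat \tau)$ and $\prod_{v \in V(\tau)} \mathscr{H}_0(\mathrm{In}(v))$: the right adjoint $\iota_1$ sends $(K_v)_v$ to the object $((\tau,(K_v)),\mathrm{id}_\tau)$, and the left adjoint $G_1$ sends $((\sigma,(K_w)),f)$ to the tuple whose $v$-entry is obtained by gluing the labels $K_w$ for $w \in f^{-1}(v)$ according to the tree structure of $f^{-1}(v)$ (gluing genus zero handlebodies along a tree yields a genus zero handlebody). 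The identity $G_1\circ \iota_1 = \mathrm{id}$ is immediate, and the natural transformation $\mathrm{id} \Rightarrow \iota_1 \circ G_1$ is induced by the tree collapse $\sigma \to \tau$; this yields the desired equivalence on classifying spaces. Since each $\mathscr{H}_0(\mathrm{In}(v))$ is a connected groupoid with automorphism group $\mathcal{H}(K_{\alpha,v})$, the product is identified with $\prod_j B\mathcal{H}(K_{\alpha,j})$.

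For (2) I would follow the same pattern. Let $(\Gamma\commacat \tau)_{\mathit{iso}} \subset (\Gamma\commacat \tau)$ denote the full subcategory of objects $((K',\beta),f)$ with $f$ an isomorphism. The key observation is that any object $((K',\beta),f)$ admits a canonical arrow to an object of $(\Gamma\commacat \tau)_{\mathit{iso}}$: the tree collapse $f$ partitions the edges of $\Gamma(\beta)$ into those mapped bijectively to edges of $\tau$ and those contracted to vertices, and forgetting the discs of $\beta$ corresponding to the contracted edges yields a subsystem $\beta_{\mathrm{core}} \subset \beta$ whose dual graph is canonically identified with $\tau$. Crucially, $\beta_{\mathrm{core}}$ is again complete because each component of $K'$ cut along $\beta_{\mathrm{core}}$ is obtained by gluing genus zero components of $K'_\beta$ along a tree, and such a gluing is again genus zero. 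This defines a retraction $R_2 \co (\Gamma\commacat \tau) \to (\Gamma\commacat \tau)_{\mathit{iso}}$ and a natural transformation $\mathrm{id} \Rightarrow \iota_2 \circ R_2$, showing that the inclusion is a homotopy equivalence on classifying spaces. The subcategory $(\Gamma\commacat \tau)_{\mathit{iso}}$ is a groupoid because in any of its morphisms the underlying arrow in $\mathscr{D}_0$ must induce an isomorphism of dual graphs, hence no discs are actually forgotten. It is connected because the classification of handlebodies shows $K$ is determined up to orientation-preserving diffeomorphism by $\tau$ and the isomorphism classes of the genus zero pieces at each vertex. Finally, the automorphism group of $((K,\alpha),\mathrm{id}_\tau)$ consists of self-diffeomorphisms of $K$ preserving $\alpha$ without permuting its discs, which is exactly $\mathrm{Stab}(\alpha)$.

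For (3), the functor $\theta'$ restricts to a functor $(\pi\commacat \tau) \to (\Gamma\commacat \tau)$ by gluing. Under the inclusions $\iota_1$ and $\iota_2$ established above, this becomes the map $\prod_v \mathscr{H}_0(\mathrm{In}(v)) \to (\Gamma\commacat \tau)_{\mathit{iso}}$ that sends $(K_v)_v$ to the glued handlebody together with its induced complete disc system (one disc per edge of $\tau$); at the basepoint where each $K_v = K_{\alpha,v}$ the image is $((K,\alpha),\mathrm{id}_\tau)$, and the induced map on automorphism groups is the gluing homomorphism $\prod_j \mathcal{H}(K_{\alpha,j}) \to \mathrm{Stab}(\alpha)$ of Proposition \ref{gluing-kernel-prop}. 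The main obstacle I expect is the connectivity assertion in (2): it is a standard consequence of the classification of 3-dimensional handlebodies via disc decompositions, but some care will be needed to track how the fixed orientation-reversing gluing $D^2 \to D^2$ enters and why no further choices beyond $\tau$ and the standard genus zero pieces are needed to reconstruct $(K,\alpha)$.
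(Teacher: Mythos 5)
Your proposal is correct and follows essentially the same route as the paper: the paper also identifies $B(\Gamma\commacat\tau)$ with $B\mathrm{Stab}(\alpha)$ by including the automorphism group of the distinguished object $(\alpha,\Gamma(\alpha)\cong\tau)$ and invoking Quillen's Theorem A via final objects in the relevant comma categories, which is exactly the adjunction/retraction you construct, and treats (1) by the same device and (3) by unwinding definitions. Your write-up merely makes explicit some details (completeness of the core subsystem, connectivity of the iso subcategory) that the paper's terse proof leaves implicit.
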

\begin{proof}
  For (1) and (2), the first equivalence follows from the definitions.
  The equivalence $B\mathrm{Stab}(\alpha) \stackrel{\simeq}{\to}
  B(\Gamma \commacat \tau)$ is induced by the inclusion $J$ of the
  automorphism group of the object $\overline{\alpha} =
  (\alpha,\Gamma(\alpha) \cong \tau) \in (\Gamma \downarrow \tau)$.
  This inclusion is a homotopy equivalence of classifying spaces by
  Quillen's Theorem A since the comma category $(\overline{\alpha}
  \commacat J)$ has a final object.  The second equivalence in (1) is
  shown by a variation of the same reasoning.  Statement (3) is a
  matter of unwinding the definitions.
\end{proof}

\subsection{Integrating out the bivalent vertices: from graphs to reduced graphs}
\label{bivalent-verts-section}

To prove that $\theta' \co \Gconn(P)\int \mathscr{H}_0 \to
\mathscr{H}(P) \int \mathscr{D}_0$ is a homotopy equivalence we shall
examine it fibrewise over $\Gconnred(P)$ via the graph reduction
functor $R\co \Gconn(P) \to \Gconnred(P)$ that removes the inessential
bivalent vertices.

Observe that for any operad $\mathcal{O}$ in a symmetric monoidal
category $(\mathscr{C},\otimes)$, the object $\mathcal{O}(*_2)$
associated with a bivalent corolla $*_2$ is a monoid in $\mathscr{C}$;
the product is induced by the edge contraction shown below.
\begin{center}
\includegraphics{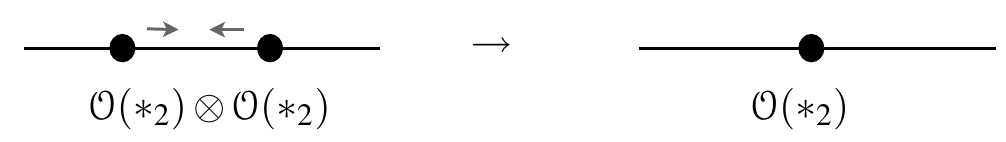}
\end{center}
Moreover, for each $p \in P$, there is an action of this monoid on the
object $\mathcal{O}(*_P)$ induced by gluing a bivalent vertex at the
$p$ leg and then contracting it in to the centre.

In the case of $B\mathscr{H}_0$, the topological monoid associated
with a bivalent corolla is homotopy equivalent to the circle group
$S^1$ since the groupoid $\mathscr{H}_0(*_2)$ is equivalent to $\Z$
(generated by the Dehn twist around the equator of a 3-ball with
marked boundary discs at the north and south poles).  Thus the space
$B\mathscr{H}(P)$ is equipped with $|P|$ circle actions.  Given an
internal edge $e$ in a graph $\tau$, let $S^1_e$ denote the circle
acting on $\mathbb{L}\pi_! pt (\tau)$ via the diagonal of the two circle
actions coming from the two endpoints of $e$, as depicted below.
\begin{center}
\includegraphics{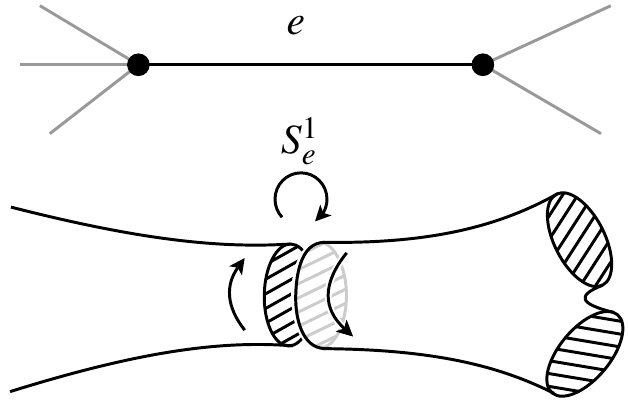}
\end{center}

\begin{lemma}
$\mathbb{L}R_! \mathbb{L}\pi_! pt (\tau) \simeq \mathbb{L}\pi_! pt (\tau)
\times_{\prod_e S^1_e} E\left(\prod_e S^1_e\right).$
\end{lemma}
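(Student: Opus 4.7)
The plan is to unwind the homotopy Kan extension as a concrete homotopy colimit, restrict to a tractable subcategory of the indexing category, and identify the resulting semi-simplicial diagram with the bar model for the Borel construction.

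By definition $\mathbb{L}R_!\mathbb{L}\pi_!pt(\tau) = \hocolim_{\gamma\in(R\commacat\tau)} \mathbb{L}\pi_!pt(\gamma)$.  Proposition \ref{bivalent-to-simplicial-prop} lets me replace this with the hocolim over the subcategory $(R\commacat\tau)_0$, and Proposition \ref{simplicial-equiv-prop} identifies the latter with $\prod_{e\in E_{\mathit{int}}(\tau)} \Delta_{semi}^{op}$.  An object $([k_e])_e$ of this product corresponds to the graph $\gamma$ obtained from $\tau$ by inserting $k_e$ extra bivalent vertices on each internal edge $e$.  Since each such inserted bivalent vertex corresponds to a ball with two marked discs, whose handlebody group is $\Z$ and whose classifying space is therefore $B\Z\simeq S^1$, Lemma \ref{identification-of-groups-lemma}(1) gives
\[
\mathbb{L}\pi_!pt(\gamma) \;\simeq\; \mathbb{L}\pi_!pt(\tau) \times \prod_e (S^1)^{k_e}.
\]

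Next I would interpret the face maps of this semi-simplicial diagram.  A face map of the $e$-factor corresponds to contracting one of the $k_e+1$ new edges over $e$.  Contracting an edge strictly between two inserted bivalent vertices multiplies the two adjacent $S^1$-factors via the monoid structure on $B\mathscr{H}_0(*_2)$; contracting an edge incident to an endpoint $v$ of $e$ absorbs the adjacent $S^1$-factor into $\mathbb{L}\pi_!pt(\tau)$ by the circle action coming from the $e$-leg at $v$.  This exhibits the $e$-factor of the diagram as a cyclic (bimodule) bar construction of the topological monoid $S^1$ with coefficients in $\mathbb{L}\pi_!pt(\tau)$, where the two sides of the bimodule structure are the two circle actions at the endpoints of $e$.

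Since the two actions at distinct endpoints commute, a coordinate change of the form $(x,g_1,\ldots,g_{k_e})\mapsto (\beta_{(g_1\cdots g_{k_e})^{-1}}(x),g_1,\ldots,g_{k_e})$ on the degree-$k_e$ simplices converts this cyclic bar construction into a standard one-sided bar construction $B_\bullet(\mathbb{L}\pi_!pt(\tau), S^1_e, pt)$ for the diagonal action $S^1_e$, whose realisation is the Borel construction $\mathbb{L}\pi_!pt(\tau)\times_{S^1_e} ES^1_e$.  Taking the product over all internal edges yields the claimed equivalence.  The main obstacle lies in this last step: one must keep careful track of edge orientations in order to verify that the face maps coming from graph contractions, once the coordinate change is applied, really do match those of a standard bar complex, and that the action that appears is precisely the diagonal $S^1_e$ described in the lemma.
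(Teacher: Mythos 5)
Your argument is correct and follows essentially the same route as the paper's (very terse) proof: reduce via Propositions \ref{bivalent-to-simplicial-prop} and \ref{simplicial-equiv-prop} to a $k$-fold semi-simplicial space indexed by $\prod_e \Delta_{semi}^{op}$ and recognise it as the iterated two-sided bar construction for the circles $S^1_e$. Your explicit identification of the face maps and the coordinate change converting the bimodule bar construction into the one-sided bar construction $B_\bullet(\mathbb{L}\pi_! pt(\tau), S^1_e, pt)$ simply spells out what the paper dismisses with ``by inspection,'' and the orientation bookkeeping you flag at the end is a convention check rather than a genuine gap.
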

\begin{proof}
  Let $k=|E_{\mathit{int}}(\tau)|$.  By Propositions
  \ref{bivalent-to-simplicial-prop} and \ref{simplicial-equiv-prop},
  $\mathbb{L}R_! \mathbb{L}\pi_! pt (\tau)$ is homotopy equivalent to
  the realisation of a $k$-fold semi-simplicial space.  Furthermore,
  by inspection one sees that this $k$-fold semi-simplicial space is
  exactly the iterated 2-sided bar construction for each of the
  circles $S^1_e$ acting as above.
\end{proof}

\begin{lemma}
  For $\tau \in \Gred$ there is a homotopy equivalence $\mathbb{L}R_!
  \mathbb{L}\Gamma_! pt (\tau) \simeq \mathbb{L}\Gamma_!  pt (\tau)$, and
  this is natural in $\tau$.
\end{lemma}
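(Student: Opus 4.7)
The plan is to identify the homotopy left Kan extension explicitly via the machinery developed earlier in the paper. First I would apply Proposition \ref{bivalent-to-simplicial-prop} to rewrite
\[
\mathbb{L}R_! \mathbb{L}\Gamma_! pt(\tau) = \hocolim_{(R\commacat \tau)} \mathbb{L}\Gamma_! pt \simeq \hocolim_{(R\commacat \tau)_0} \mathbb{L}\Gamma_! pt,
\]
and then use Proposition \ref{simplicial-equiv-prop} to identify $(R\commacat \tau)_0 \simeq \prod_{e \in E_\mathit{int}(\tau)} \Delta_{semi}^{op}$. Each factor has a terminal object (the $0$-simplex), so the product has a terminal object corresponding to $\tau$ regarded as its own refinement; in particular, $B(R\commacat \tau)_0$ is contractible.

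The key step, and what I expect to be the main obstacle, is to show that $\mathbb{L}\Gamma_! pt$ sends every morphism of $(R\commacat \tau)_0$ to a homotopy equivalence. A morphism here is a tree collapse $\sigma \to \sigma'$ eliminating bivalent internal vertices, and by Lemma \ref{identification-of-groups-lemma} the induced map is identified with $B\mathrm{Stab}(\alpha_\sigma) \to B\mathrm{Stab}(\alpha_{\sigma'})$, where $\alpha_\sigma$ differs from $\alpha_{\sigma'}$ by extra parallel cutting discs. I would reduce to checking the atomic case of adding a single parallel disc and argue directly using Proposition \ref{gluing-kernel-prop}: the new ``cylinder'' piece (a genus zero ball with two marked boundary discs, whose handlebody group is $\Z$, generated by its Dehn twist) contributes an extra $\Z$ factor to $\prod_j \mathcal{H}(K_{\alpha,j})$, while the split cut imposes two new Dehn-twist relations. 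Because the cylinder has only one Dehn twist, the two new relations conspire to kill the new $\Z$ factor and collapse to the single relation coming from the original unsplit cut. Thus the comparison map of stabilisers is in fact an \emph{isomorphism}, hence an equivalence on classifying spaces. This is precisely where the $\Gamma$ case differs from the $\pi$ case treated in the preceding lemma: for $\pi$, the analogous extra circle persists and must be killed by a bar construction, whereas for $\Gamma$ it is already absorbed into the stabiliser identification.

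Once the pointwise-equivalence claim is established, the conclusion follows by a standard argument. The unique morphisms to the terminal object $\tau \in (R\commacat \tau)_0$ provide a natural transformation from the diagram $\mathbb{L}\Gamma_! pt$ to the constant diagram at $\mathbb{L}\Gamma_! pt(\tau)$, and this is a pointwise equivalence by the previous step. Passing to homotopy colimits yields
\[
\hocolim_{(R\commacat \tau)_0} \mathbb{L}\Gamma_! pt \simeq \mathbb{L}\Gamma_! pt(\tau) \times B(R\commacat \tau)_0 \simeq \mathbb{L}\Gamma_! pt(\tau),
\]
where the second equivalence uses contractibility of $B(R\commacat \tau)_0$. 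Naturality in $\tau \in \Gred$ comes for free from the universal property of the homotopy left Kan extension, since the unit map $\mathbb{L}\Gamma_! pt \to \mathbb{L}R_!\mathbb{L}\Gamma_! pt \circ R$ restricts along $\Gred \hookrightarrow \Gconn(P)$ to a natural transformation that we have just shown to be a pointwise equivalence.
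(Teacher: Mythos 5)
Your overall architecture matches the paper's: reduce to $(R\commacat \tau)_0 \simeq \prod_{e}\Delta_{semi}^{op}$ via Propositions \ref{bivalent-to-simplicial-prop} and \ref{simplicial-equiv-prop}, show that $\mathbb{L}\Gamma_! pt$ sends every arrow of this category to a homotopy equivalence, and conclude. For the key pointwise step you take a genuinely different route. The paper argues directly with isotopy: a cutting disc $D$ parallel to another disc of the system (or to a marked boundary disc) is determined up to isotopy by that disc, so a mapping class preserves $\overline{\alpha}$ if and only if it preserves $\alpha = \overline{\alpha}\cup\{D\}$, whence the inclusion $\mathrm{Stab}(\alpha)\hookrightarrow \mathrm{Stab}(\overline{\alpha})$ is already an equality of subgroups of $\mathcal{H}(K)$. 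You instead compute with the presentations supplied by Proposition \ref{gluing-kernel-prop} and check that the extra factor $\mathcal{H}(K_{0,2})\cong\Z$ contributed by the cylinder piece is exactly cancelled by the new twist relations. That computation is correct --- the essential geometric input, that the twists at the two ends of the cylinder coincide in $\mathcal{H}(K_{0,2})$, is the same fact underlying the paper's isotopy argument --- and it has the virtue of making transparent why the $\Gamma$ case differs from the $\pi$ case of the preceding lemma. You should, however, record why the resulting abstract isomorphism of quotients is realised by the inclusion of stabilisers; this follows from compatibility of the gluing homomorphisms of Proposition \ref{gluing-kernel-prop} under composing gluings, but it is asserted rather than checked.

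There is one genuine error in your concluding step: $[0]$ is \emph{not} a terminal object of $\Delta_{semi}^{op}$. There are $n+1$ morphisms $[n]\to[0]$ in $\Delta_{semi}^{op}$, one for each injection $[0]\hookrightarrow[n]$ in $\Delta_{semi}$; concretely, a chain of $n$ extra inessential bivalent vertices over an internal edge of $\tau$ admits $n+1$ distinct tree collapses onto $\tau$, one for each choice of surviving edge. (You are thinking of $\Delta^{op}$, where $[0]$ is initial; the unique map $[n]\to[0]$ in $\Delta$ is not injective, so it is absent from $\Delta_{semi}$.) Hence the natural transformation to the constant diagram built from ``the unique morphisms to the terminal object'' does not exist, and the displayed identification of the homotopy colimit as $\mathbb{L}\Gamma_! pt(\tau)\times B(R\commacat\tau)_0$ needs a different justification. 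The conclusion is salvageable: $B\Delta_{semi}^{op}$ is indeed contractible (it is the fat realisation of a point), and in your situation all face maps out of a given level are induced by one and the same equality of stabilisers, so the $k$-fold semi-simplicial space is levelwise equivalent to the constant one at $\mathbb{L}\Gamma_! pt(\tau)$ and its realisation is equivalent to $\mathbb{L}\Gamma_! pt(\tau)$. Note that ``all face maps are equivalences'' alone would not suffice for an arbitrary semi-simplicial space, so some such coherence must actually be recorded. Your final remark deriving naturality in $\tau$ from the unit of the homotopy left Kan extension is fine.
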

\begin{proof}
  Suppose $K$ is a handlebody with marked boundary discs, $\alpha$ is
  a complete disc system in $K$, and $D \subset K$ is a disc of
  $\alpha$ that is parallel to either one of the marked boundary discs
  or another disc of $\alpha$.  Then $D$ corresponds to an edge with
  an inessential endpoint in the dual graph $\Gamma(\alpha)$, and
  forgetting $D$ prodces a complete disc system $\overline{\alpha}$
  whose dual graph is obtained from $\Gamma(\alpha)$ by contracting
  the edge corresponding to $D$.  An element of $\mathcal{H}(K)$ acts
  trivially on the dual graph of $\overline{\alpha}$ if and only if it
  acts trivially on the dual graph of $\alpha$.  It follows from this
  and Lemma \ref{identification-of-groups-lemma} that
  $\mathbb{L}\Gamma_! pt$ sends each arrow of $(R\commacat \tau)_0$ to
  a homotopy equivalence.  The result now follows from Propositions
  \ref{bivalent-to-simplicial-prop} and \ref{simplicial-equiv-prop}.
\end{proof}

Combining the above two lemmas with Proposition \ref{gluing-kernel-prop} and
Lemma \ref{identification-of-groups-lemma} now yields the key lemma of
the proof.

\begin{lemma}\label{final-lemma}
  The natural transformation $\theta''$ induces a homotopy equivalence
  of functors, $\mathbb{L}R_! \mathbb{L}\pi_! pt \stackrel{\simeq}{\to}
  \mathbb{L}R_! \mathbb{L}\Gamma_! pt.$
\end{lemma}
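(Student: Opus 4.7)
The plan is to check that $\theta''$ becomes a pointwise homotopy equivalence after applying $\mathbb{L}R_!$. Since both sides are functors on $\Gconnred(P)$, it suffices to verify this at each $\tau \in \Gconnred(P)$. First I would fix such a $\tau$ and choose a handlebody $K \in \mathscr{H}(P)$ equipped with a complete disc system $\alpha \subset K$ whose dual graph is $\tau$; write $k = |E_{\mathit{int}}(\tau)|$ and let $\{K_{\alpha,j}\}$ denote the components of $K$ cut along $\alpha$. Combining Lemma \ref{identification-of-groups-lemma} with the two lemmas preceding the statement produces the identifications
\[
\mathbb{L}R_!\mathbb{L}\pi_! pt(\tau) \simeq \Bigl(\prod_j B\mathcal{H}(K_{\alpha,j})\Bigr)_{h(S^1)^k}, \qquad \mathbb{L}R_!\mathbb{L}\Gamma_! pt(\tau) \simeq B\mathrm{Stab}(\alpha),
\]
under which the map induced by $\theta''$ becomes the map on Borel constructions coming from the gluing homomorphism of Proposition \ref{gluing-kernel-prop}.

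Next I would invoke Proposition \ref{gluing-kernel-prop} to view the gluing homomorphism as a short exact sequence
\[1 \to \mathbb{Z}^k \to \prod_j \mathcal{H}(K_{\alpha,j}) \to \mathrm{Stab}(\alpha) \to 1,\]
and observe that it is a \emph{central} extension: its generators are Dehn twists around the cutting discs, and such a twist commutes with every element of $\mathcal{H}(K_{\alpha,j})$ since the latter fixes the relevant boundary disc pointwise. A central extension by $\mathbb{Z}^k$ realises $\prod_j B\mathcal{H}(K_{\alpha,j}) \to B\mathrm{Stab}(\alpha)$ as a principal $B\mathbb{Z}^k = (S^1)^k$-bundle, and the Borel construction of a free $(S^1)^k$-action recovers the base; hence the left-hand side is homotopy equivalent to $B\mathrm{Stab}(\alpha)$.

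The remaining point is to verify that the $(S^1)^k$-action of the first preceding lemma---the diagonal of the two boundary-disc Dehn twist circles at the endpoints of each internal edge---coincides with the principal $(S^1)^k$-action of the central extension, whose kernel is the ``anti-diagonal'' pair of opposite Dehn twists. I expect these two subgroups to agree once one tracks the orientation-reversing identification of boundary discs used to define the composition in $\Hand$: an intrinsically positive Dehn twist on each side of a cut appears as a pair of opposite twists from the glued handlebody's point of view. This orientation bookkeeping is the main obstacle; with it in hand, the identification of circle actions is immediate and the desired homotopy equivalence follows.
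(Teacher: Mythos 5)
Your proposal is correct and follows essentially the same route as the paper, which proves this lemma simply by citing the combination of the two preceding lemmas with Proposition \ref{gluing-kernel-prop} and Lemma \ref{identification-of-groups-lemma}. In fact you supply more detail than the paper does---in particular the observation that the extension is central (so that the Borel construction of the resulting principal $(S^1)^k$-action on $\prod_j B\mathcal{H}(K_{\alpha,j})$ recovers $B\mathrm{Stab}(\alpha)$) and the orientation bookkeeping reconciling the diagonal twist circles with the opposite-twist kernel, both of which are genuine points that the paper leaves implicit.
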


We can now complete the proof.  Up to homotopy, the map $\Theta \co
\mathbb{L}\mathrm{Mod}_!  \Hand_0(*_P) \to \Hand(*_P)$ factors as a
sequence of equivalences
\begin{align*}
\mathbb{L}\mathrm{Mod}_!  \Hand_0(*_P) & \simeq \hocolim_{\Gconn(P) \int \mathscr{H}_0} pt \\
& \simeq \hocolim_{\Gconnred(P)} \mathbb{L}R_! \mathbb{L}\pi_! pt \\
& \stackrel{\simeq}{\to} \hocolim_{\Gconnred(P)}
\mathbb{L}R_!\mathbb{L}\Gamma_! pt \\
& \simeq B\left(\mathscr{H}(P) \int \mathscr{D}_0\right) \stackrel{\simeq}{\to}
B\mathscr{H}(P) \simeq \Hand(*_P),
\end{align*}
(of course, when $P$ is empty, the composition fails to be a homotopy
equivalence on the component corresponding to a solid torus).  This
completes the proof.

\section{Cohomology of modular envelopes}\label{cohomology-section}

\subsection{The Bousfield-Kan spectral sequence for a modular envelope}
Expressing a modular operad $\mathcal{M}$ as the derived modular
envelope of a cyclic operad $\mathcal{O}$ gives, in particular, a
description of the individual spaces of $\mathcal{M}$ in terms of
homotopy colimits of products of spaces of $\mathcal{O}$.  This leads
to a spectral sequence computing the homology of $\mathcal{M}$.

In general, let $F\co \mathscr{C} \to \Top$ be a functor.  There is a
Bousfield-Kan homology spectral sequence,
\[
E^2_{p,q} = {\colim}^p H_q(F; k) \Rightarrow H_{p+q}(\hocolim F; k),
\]
where $\colim^p$ is the $p$th left derived functor of
$\colim_\mathscr{C}$.  The functor $F$ is said to be \emph{formal}
over a field $k$ if there is a zigzag of natural transformations
between the functors $C_*(F;k)$ and $H_*(F; k)$ from $\mathscr{C}$ to
the category $\dgVect$ of chain complexes over $k$ (where the values
of the second functor are given the zero differential).

\begin{proposition}
If $F\co \mathscr{C} \to \Top$ is a functor that is formal over $k$ then
the Bousfield-Kan spectral sequence for $H_*(\hocolim F; k)$ collapses
at the $E^2$ page.
\end{proposition}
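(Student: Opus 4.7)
The plan is to identify the Bousfield-Kan spectral sequence with the standard hyperhomology spectral sequence for the derived colimit of the dg-functor $C_*(F;k)\co \mathscr{C} \to \dgVect$, and then to exploit its invariance under objectwise quasi-isomorphisms of such dg-functors. First I would note that singular chains with field coefficients preserve homotopy colimits, giving a natural quasi-isomorphism $C_*(\hocolim F;k) \simeq \mathbb{L}\colim C_*(F;k)$; the standard two-sided bar-construction bicomplex computing the right-hand side yields a convergent spectral sequence with $E^2_{p,q} = \colim^p H_q(F;k)$, and by construction this coincides with the Bousfield-Kan spectral sequence.

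The key general fact is that the spectral sequence built from this bicomplex is invariant under objectwise quasi-isomorphism of dg-functors: a natural transformation of functors $\mathscr{C} \to \dgVect$ that is pointwise a quasi-isomorphism induces an isomorphism on the $E^1$-page (whose entries involve only values of the $H_q$), and hence an isomorphism of spectral sequences from $E^2$ onwards with matching abutments; the same conclusion extends through zigzags. Applying this to the formality zigzag provided by the hypothesis, the Bousfield-Kan spectral sequence for $F$ is isomorphic, from $E^2$ on, to the hyperhomology spectral sequence for the dg-functor $H := H_*(F;k)$ equipped with zero differential.

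Finally, because $H$ has zero differential it splits as a direct sum $H = \bigoplus_q H_q[q]$ of dg-functors concentrated in a single degree. Derived colimit commutes with direct sums and degree shifts, so
$$H_n(\mathbb{L}\colim H) \;\cong\; \bigoplus_{p+q=n} \colim^p H_q \;=\; \bigoplus_{p+q=n} E^2_{p,q}.$$
Working over a field $k$ there are no extension problems, and the total dimension of the abutment in degree $n$ already equals the sum of dimensions of $E^2$ along the antidiagonal of total degree $n$. Consequently every differential $d^r$ with $r \geq 2$ must vanish, which is the desired collapse.

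The main technical hinge is the quasi-isomorphism invariance in the second paragraph; this is a standard property of the bar-construction model for derived colimits, but it is the crucial point through which the formality hypothesis transfers from the level of dg-functors to the level of the spectral sequence, and I would be careful to cite or verify it precisely rather than take it for granted.
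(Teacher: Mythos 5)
Your proposal is correct and follows essentially the same route as the paper: both identify the Bousfield--Kan spectral sequence with the one arising from the bar-construction double complex for $\mathbb{L}\colim C_*(F;k)$, transport the formality zigzag to an isomorphism of spectral sequences from $E^2$ onward, and then observe that the spectral sequence for $H_*(F;k)$ with zero differential degenerates. The only cosmetic difference is that the paper deduces degeneration directly from the vanishing of the vertical differential in the replacement double complex, whereas you phrase it via the splitting of $\mathbb{L}\colim H$ and a dimension count; the splitting argument alone already suffices (and is the safer justification if the homology groups are not finite dimensional).
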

\begin{proof}
  The space $\hocolim F$ is the geometric realisation of the
  bisimplicial set
\[
(p,q) \mapsto \bigoplus_{\sigma \in N_p\mathscr{C}} S_q F(\sigma(0)),
\]
where $N_p\mathscr{C}$ is the set of $p$-simplices in the nerve of
$\mathscr{C}$, $S_q(-)$ is the set of singular $q$-simplices and
$\sigma(0)$ is the first vertex of $\sigma$.  Hence the homology of
$\hocolim F$ is computed by the total complex of the
associated double complex
\[
B_{p,q} = \bigoplus_{\sigma \in N_p\mathscr{C}} C_q( F(\sigma(0)); k)
\]
The Bousfield-Kan homology spectral sequence comes from the horizontal
filtration on this double complex.  If $F$ is formal then the double
complex $B$ is quasi-isomorphic to the double complex $B'$
obtained by replacing $C_*(F; k)$ with $H_*(F;k)$. The spectral
sequences for the horizontal filtrations on $B$ and $B'$ are
isomorphic from the $E^2$ page onwards.  Since the vertical
differential in $B'$ is identically zero, the $B'$ spectral sequence
collapses at $E^1$.
\end{proof}

Let $K_{g,n}$ be a 3-dimensional compact connected oriented handlebody of genus $g$ with
$n$ marked boundary discs.  By Theorem \ref{handlebody-theorem},
\[
B\Diff(K_{g,n}) \simeq \hocolim_{\G_{g,n}} \Hand_0
\]
where $\G_{g,n}$ is the full subcategory of $\Gconn(n)$ consisting of
graphs of rank $g$.  This gives a Bousfield-Kan homology spectral
sequence
\begin{equation}\label{BKSS}
E^2_{p,q} = {\colim_{\G_{g,n}}}^p H_q(\Hand_0; k) \Rightarrow
H_{p+q}(B\Diff(K_{g,n}); k)
\end{equation}
The main result of \cite{cyclic-formality} is that the cyclic operad
$\Hand_0$ is formal over $\mathbb{R}$.  Hence,
\begin{corollary}
  When $k=\mathbb{R}$ the above spectral sequence \eqref{BKSS}
  collapses at the $E^2$ page.
\end{corollary}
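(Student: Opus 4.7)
The plan is to assemble the corollary from three ingredients that are already on the table: Theorem \ref{handlebody-theorem} (which identifies the abutment of the spectral sequence), the preceding proposition (which reduces collapse to a formality statement), and the main result of \cite{cyclic-formality} (which provides the formality of $\Hand_0$ over $\mathbb{R}$). The main content is just to check that these three combine cleanly; there is no real obstacle.

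First I would observe that the functor to which the Bousfield-Kan spectral sequence \eqref{BKSS} is being applied is nothing other than the restriction of $\Hand_0\co \G \to \Top$ to the full subcategory $\G_{g,n} \subset \Gconn(n)$ of connected rank-$g$ graphs with $n$ legs. By Theorem \ref{handlebody-theorem}, the homotopy colimit of this restriction is homotopy equivalent to $\Hand(*_n)$ on the genus $g$ component, and hence, after picking the appropriate handlebody, to $B\Diff(K_{g,n})$. This identifies the abutment of \eqref{BKSS} with $H_*(B\Diff(K_{g,n}); \mathbb{R})$.

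Next I would verify that formality of $\Hand_0$ as a cyclic operad in the sense of \cite{cyclic-formality} yields formality of the underlying functor $\G_{g,n} \to \Top$ in the sense required by the preceding proposition. This is essentially immediate from our definitions: a cyclic operad is a symmetric monoidal functor from $\G$ to the ambient category, so a zigzag of quasi-isomorphisms of cyclic operads in $\dgVect$ between $C_*(\Hand_0; \mathbb{R})$ and $H_*(\Hand_0; \mathbb{R}) = \mathcal{BV}$ is a fortiori a zigzag of natural transformations between these two diagrams $\G \to \dgVect$. Restricting to $\G_{g,n}$ preserves this zigzag, so the hypothesis of the preceding proposition is satisfied.

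With these two points in hand, the corollary follows by direct application of the proposition. The only thing that could possibly be called an obstacle is the small bookkeeping check in the previous paragraph, namely that formality at the level of cyclic operads descends to formality of the underlying diagram after restriction to $\G_{g,n}$; but since cyclic operads are by construction functors on $\G$, there is nothing further to verify.
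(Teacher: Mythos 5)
Your proposal is correct and follows essentially the same route as the paper, which simply invokes the formality of $\Hand_0$ over $\mathbb{R}$ from \cite{cyclic-formality} together with the preceding proposition; your extra bookkeeping (that a zigzag of pointwise quasi-isomorphisms of cyclic operads in $\dgVect$ is in particular a zigzag of natural transformations of diagrams on $\G$, hence restricts to $\G_{g,n}$) is exactly the check the paper leaves implicit.
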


\subsection{Graph homology and modular envelopes}

Let us recall the definition of the graph homology complexes
$C\Gamma_*^\mathcal{O}(g,n)$ for a cyclic operad $\mathcal{O}$ in the
category $\dgVect$ of chain complexes over a field $k$.  See
\cite{Lazarev-Voronov} and \cite{Conant-Vogtmann} for further details.

If $V$ is a finite dimensional vector space then $\mathrm{Det}(V)$
denotes the 1-dimensional vector space given by the top exterior power
$\Lambda^{\mathrm{dim}(V)}V$.  The orientation line of a graph
$\gamma$ is defined as,
\[
\mathrm{Or}(\gamma) \defeq  \mathrm{Det}(k^{E_{int}(\gamma)})\otimes
\mathrm{Det}(H^1(\gamma)),
\]
regarded as a graded vector space concentrated in degree
$|E_{int}(\gamma)| -1$.  For a non-loop internal edge $e$, let
$\gamma/e$ denote the graph resulting from contracting $e$.  There is
an isomorphism $(e \wedge -) \co \mathrm{Or}(\gamma/e)
\stackrel{\cong}{\to} \mathrm{Or}(\gamma)$.

For each pair of nonnegative integers $(g,n)$ (excluding $(0,0)$,
$(1,0)$, and $(0,1)$) there is a graph complex for $\mathcal{O}$ made
from graphs of rank $g$ with $n$ ordered legs.  It is defined as
\[
C\Gamma_*^{\mathcal{O}}(g,n) \defeq \bigoplus_{\gamma}
\mathcal{O}(\gamma)\otimes \mathrm{Or}(\gamma),
\]
where the summation is over isomorphism classes of graphs in
$\G_{g,n} $. The differential on this chain complex is the sum of the
internal differential $d_\mathcal{O}$ from $\mathcal{O}$ acting on the
first factor and an edge contracting differential $d_E$ acting on the
second factor by the formula,
\[
d_E(\gamma \otimes (e\wedge \omega)) = \sum_{e} \gamma/e \otimes \omega,
\]
where the summation runs over all non-loop internal edges.  It is easy
to see that a quasi-isomorphism of cyclic operads $\mathcal{O} \to
\mathcal{O}'$ induces a quasi-isomorphism of the associated graph
complexes.

\begin{theorem}
  $H^*(\hoMod_!\mathcal{O}(*_n)_g) \cong H\Gamma^{D\mathcal{O}}_{3g -
    4 - *}(g,n)$.
\end{theorem}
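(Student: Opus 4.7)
The plan is to deduce the isomorphism from the cochain-level graph complex machinery of \cite{Lazarev-Voronov}, which I would treat as a black box for the combinatorial bookkeeping. By definition, $\hoMod_!\mathcal{O}(*_n)_g = \hocolim_{\G_{g,n}} \mathcal{O}$, so the first step is to realise the normalised singular cochain complex of this space as the totalisation of a Bousfield-Kan bicomplex: one direction comes from the nerve of $\G_{g,n}$, and the other from the singular cochain complexes $C^*(\mathcal{O}(\gamma); k)$ indexed by the objects $\gamma$.

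Next, I would collapse this bicomplex by working fibrewise over the graph reduction functor $R$ restricted to $\G_{g,n}$. Proposition \ref{bivalent-to-simplicial-prop} together with Proposition \ref{simplicial-equiv-prop} lets us replace the contributions of the inessential bivalent vertices above a reduced graph $\gamma$ by standard semi-simplicial models, and a straightforward acyclicity argument then identifies the residual contribution of $\gamma$ with a one-dimensional orientation line $\mathrm{Or}(\gamma)$ concentrated in degree $|E_{int}(\gamma)| - 1$. After this reduction, the cochain complex decomposes as a sum, over isomorphism classes of reduced graphs, of terms of the form $C^*(\mathcal{O}(\gamma); k) \otimes \mathrm{Or}(\gamma)$, with the remaining pieces of the bicomplex differential restricting to an edge-contraction differential between neighbouring graphs together with the internal differential inherited from $\mathcal{O}$.

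Third, I would identify this complex with the graph homology complex $C\Gamma_{3g-4-*}^{D(C_*\mathcal{O})}(g,n)$. The factor $C^*(\mathcal{O}(\gamma); k)$ is, by construction, quasi-isomorphic to the value of the dg-dual cyclic operad $D(C_*\mathcal{O})$ on the corolla associated to $\gamma$, and the combined differential then matches $d_E + d_{D\mathcal{O}}$. The degree shift $3g - 4 - *$ arises from combining the placement of $\mathrm{Or}(\gamma)$ with the grading inversion from the dg-dual, using that trivalent graphs are top-dimensional in $\G_{g,n}$.

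The hard part will be matching all the signs and orientation conventions between (i) the Alexander-Whitney totalisation of the Bousfield-Kan bicomplex, (ii) the edge-contracting differential $d_E$ in the graph complex, and (iii) the Koszul signs introduced by the dg-dual functor $D$. This sign bookkeeping is essentially the technical content of \cite{Lazarev-Voronov}, so I would invoke their theorem for this step rather than repeat the calculation, after verifying that the Grothendieck-construction model for $\hocolim$ used here matches their combinatorial setup and that the degenerate low-genus pairs $(0,0)$, $(0,1)$, $(1,0)$ excluded from $C\Gamma^*_*(g,n)$ align with the components removed in Theorem \ref{handlebody-theorem}.
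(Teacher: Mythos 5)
Your overall strategy---push the cochains of $\hocolim_{\G_{g,n}}\mathcal{O}$ down to the graph category and invoke \cite{Lazarev-Voronov}---is in the same spirit as the paper's proof, which simply forms the presheaf $U \mapsto C^*(\pi^{-1}U)$ on $B\G_{g,n}$ for the canonical projection $\pi$ and quotes the Lazarev--Voronov identification of its hypercohomology with $H\Gamma^{D\mathcal{O}}_{3g-4-*}(g,n)$. But your attempt to carry out more of that identification by hand contains a step that is false. You assert that after the fibrewise reduction the complex becomes $\bigoplus_\gamma C^*(\mathcal{O}(\gamma);k)\otimes \mathrm{Or}(\gamma)$ and that ``$C^*(\mathcal{O}(\gamma);k)$ is, by construction, quasi-isomorphic to the value of $D(C_*\mathcal{O})$'' on the corollas of $\gamma$. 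The dg-dual $D$ of Getzler--Kapranov and Lazarev--Voronov is \emph{not} the arity-wise linear dual: it is a cobar-type construction, a sum over trees decorated by linear duals of $\mathcal{O}$ with a bar differential, and the two are not quasi-isomorphic in general. A decisive test case: for $\mathcal{O}=pt$ (the terminal cyclic operad) the homotopy colimit is $B\G_{g,n}$ itself, and the theorem then asserts that its cohomology is computed by graph homology of $D(\mathrm{Com})$, which is a shifted Lie operad---this is Kontsevich's theorem about $\mathrm{Out}(F_g)$---whereas your identification would replace this by commutative graph homology, which computes something entirely different. Indeed, if your step were correct the dg-dual would be redundant in the statement of the theorem, and the remark in the introduction that self-duality of $\Ass$ is a theorem of Koszul-duality type would be vacuous.

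The earlier reduction is also not justified as stated. Propositions \ref{bivalent-to-simplicial-prop} and \ref{simplicial-equiv-prop} control the fibres of the reduction functor $R$ over a fixed reduced graph, but they neither produce the orientation line $\mathrm{Or}(\gamma)$ in degree $|E_{int}(\gamma)|-1$ nor yield a direct sum decomposition of the cochain complex over isomorphism classes of graphs. Those features come from the stratification of $B\G_{g,n}$ by open cells (simplices of metrics on graphs, modulo automorphisms): the contribution of the stratum of $\gamma$ is its \emph{compactly supported} cohomology with coefficients in the direct image presheaf, and it is exactly this local Verdier-duality computation that simultaneously produces $\mathrm{Or}(\gamma)$, the complementary degree $3g-4-*$, and the replacement of $C_*\mathcal{O}$ by its dg-dual. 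In other words, what you defer to \cite{Lazarev-Voronov} is not ``sign bookkeeping'' but the entire dualisation mechanism; once their theorem is invoked honestly you are back to the paper's short proof, and the intermediate complex you interpose is not the right one.
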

Here $D\mathcal{O}$ is the dg-dual cyclic operad (see
\cite{Getzler-cyclic} or \cite{Lazarev-Voronov} for the definition),
and $\hoMod_!\mathcal{O}(*_n)_g$ denotes the genus $g$ component of
the space $\hoMod_!\mathcal{O}(*_n)$ in the canonical genus grading of
the modular envelope.
\begin{proof}
  Consider the canonical projection $\pi\co \hoMod_!\mathcal{O}(*_n)_g
  \to B\G_{g,n}$.  The hypercohomology of the presheaf $\mathscr{F}\co
  U \mapsto C^*(\pi^{-1}U)$ on $B\G_{g,n}$ is isomorphic to the
  cohomology of $\hoMod_!\mathcal{O}(*_n)_g$.  By the results of
  \cite{Lazarev-Voronov}, the hypercohomology of $\mathscr{F}$ in
  degree $*$ is isomorphic to the graph homology of $D\mathcal{O}$ in
  degree $3g- 4 -*$.
\end{proof}

Combining Theorem \ref{handlebody-theorem} and the formality of the
cyclic operad $\Hand_0$ from \cite{cyclic-formality} with the above
theorem then gives,

\begin{corollary}
  $H^*(B\Diff(K_{g,n});\mathbb{R}) \cong H\Gamma^{D\mathcal{BV}}_{3g -
    4 - *}(g,n)$, where $\mathcal{BV} = H_*(\Hand_0; \mathbb{R})$ is
  the Batalin-Vilkovisky cyclic operad.
\end{corollary}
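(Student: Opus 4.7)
The plan is to obtain the corollary by chaining together three previously established results: Theorem \ref{handlebody-theorem}, the graph-cohomology theorem stated immediately above, and the formality of $\Hand_0$ from \cite{cyclic-formality}. Throughout I assume $(g,n)$ is such that $K_{g,n}$ is admissible (i.e.\ $(g,n) \neq (0,0), (0,1), (1,0)$), so that the exceptional solid torus component is avoided and the graph complex $C\Gamma^{D\mathcal{BV}}_*(g,n)$ is defined.

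First I would identify $B\Diff(K_{g,n})$ with the genus $g$ component of $\hoMod_!\Hand_0(*_n)$. By admissibility, the components of $\Diff(K_{g,n})$ are contractible, so $B\Diff(K_{g,n}) \simeq B\HH(K_{g,n})$, which is a component of $\Hand(*_n) = B\mathscr{H}(P)$. Theorem \ref{handlebody-theorem} then provides a homotopy equivalence with the corresponding component of $\hoMod_!\Hand_0(*_n)$. Since the modular envelope carries a canonical genus grading induced from rank of graphs, this component is precisely $\hoMod_!\Hand_0(*_n)_g$.

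Next I would apply the graph-cohomology theorem above to the cyclic dg operad $\mathcal{O} = C_*(\Hand_0;\mathbb{R})$; passing from the topological operad $\Hand_0$ to its singular chains does not alter $\hoMod_!$ on cohomology, since the hypercohomology description in the proof of that theorem depends only on the cochains of the fibres of $\pi\co \hoMod_!\Hand_0(*_n)_g \to B\G_{g,n}$. This yields an isomorphism
\[
H^*(B\Diff(K_{g,n});\mathbb{R}) \;\cong\; H\Gamma^{D(C_*\Hand_0)}_{3g-4-*}(g,n).
\]

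Finally I would invoke the formality result of \cite{cyclic-formality}, which asserts a zigzag of quasi-isomorphisms of cyclic dg operads between $C_*(\Hand_0;\mathbb{R})$ and $H_*(\Hand_0;\mathbb{R}) = \mathcal{BV}$. Since the dg-dual construction $D(-)$ preserves quasi-isomorphisms of cyclic dg operads, and since a quasi-isomorphism of cyclic dg operads induces a quasi-isomorphism of the associated graph complexes (as noted above the graph-cohomology theorem), we obtain $H\Gamma^{D(C_*\Hand_0)}_{*}(g,n) \cong H\Gamma^{D\mathcal{BV}}_{*}(g,n)$, which combined with the previous isomorphism completes the proof. The only non-routine ingredient is the formality input, which is imported as a black box; within this paper the argument is a straightforward assembly, and the main thing to check carefully is that the degree shift $3g-4-*$ and the exclusion of the exceptional component are tracked consistently through each of the three reductions.
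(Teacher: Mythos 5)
Your proposal is correct and follows essentially the same route as the paper, whose own ``proof'' is just the single sentence combining Theorem \ref{handlebody-theorem}, the formality of $\Hand_0$, and the preceding graph-homology theorem. The extra details you supply (passing to singular chains before applying the graph-cohomology theorem, and using that quasi-isomorphisms of cyclic operads induce quasi-isomorphisms of graph complexes to replace $D(C_*\Hand_0)$ by $D\mathcal{BV}$) are exactly the steps the paper leaves implicit.
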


\bibliographystyle{amsalpha} 
\bibliography{biblio-clean}

\end{document}